\newcommand{\HD}{\mathcal{H}_P}
\newcommand{\F}{\mathcal F}
\newcommand{\Folner}{F{\o}lner }
\newcommand{\M}{\mathcal{M}}
\newcommand{\N}{\mathbb{N}}
\newcommand{\Z}{\mathbb{Z}}
\newcommand{\und}{\underline}
\newcommand{\MG}{\mathcal{M}_G}
\newcommand{\MC}{\mathcal M(\mathscr{C})}
\newcommand{\MCe}{\mathcal M(\mathscr{C})^{e}}
\newcommand{\dbar}{\bar{d}}
\renewcommand{\phi}{\varphi}
\newcommand{\eps}{\varepsilon}
\author[Sejal Babel]{Sejal Babel}
\address[Sejal Babel]{
Faculty of Mathematics and Computer Science, Jagiellonian University in Krakow, ul. \L o\-jasiewicza 6, 30-348 Krak\'ow, Poland
}\email{sejal.babel@doctoral.uj.edu.pl}
\author[Martha Łącka]{Martha Łącka}
\address[Martha Łącka]{
Faculty of Mathematics and Computer Science, Jagiellonian University in Krakow, ul. \L o\-jasiewicza 6, 30-348 Krak\'ow, Poland
}\email{martha.ubik@uj.edu.pl }
\author[Marcel Mroczek]{Marcel Mroczek}
\address[Marcel Mroczek]{
Faculty of Mathematics and Computer Science, Jagiellonian University in Krakow, ul. \L o\-jasiewicza 6, 30-348 Krak\'ow, Poland
-- \and --
Doctoral School of Exact and Natural Sciences, Jagiellonian University, 
ul. \L o\-jasiewicza 11, 30-348 Krak\'ow, Poland
}\email{marcel.mroczek@doctoral.uj.edu.pl}
\theoremstyle{plain}
\newtheorem{thm}{Theorem}[section]
\newtheorem{lem}[thm]{Lemma}
\newtheorem{cor}[thm]{Corollary}
\theoremstyle{definition}
\newtheorem{defn}[thm]{Definition}
\newtheorem{rem}[thm]{Remark}
\newtheorem{ex}[thm]{Example}
\DeclareMathOperator{\Gen}{Gen}
\def\blfootnote{\gdef\@thefnmark{}\@footnotetext}
\title[Generic Points in a Characteristic Class for Amenable Group Actions]{Generic Points in a Characteristic Class for Amenable Group Actions Are Closed in the Besicovitch Pseudometric}
\begin{document}
\begin{abstract}
We consider an action of a countable amenable group on a compact metric space, focusing on the set of generic points with respect to a fixed \Folner sequence. For a given characteristic class, we prove that the set of points that are generic (along the \Folner sequence) for some measure in this class is closed with respect to the Besicovitch pseudometric.
\end{abstract}

\blfootnote{\textup{2020} \textit{Mathematics Subject Classification}: 37B05, 37A15, 22F10}
\date{\today}
\maketitle

	\vspace{7mm}

	\textbf{keywords:} \emph{characteristic classes, the Besicovitch pseudometric, amenable groups, joinings, generic points}
		\vspace{7mm}
        
\section{Introduction}
The Besicovitch pseudometric has long been an important tool in the study of dynamical systems, providing a means of comparing orbits via their asymptotic average distance. Its definition captures a natural notion of proximity between the long-term behaviours of two points. It also admits a measure-theoretic counterpart: when considered on the space of invariant probability measures, it induces the metric $\bar\rho$, which generalises Ornstein’s celebrated $\bar d$ metric~\cite{Spec}. In this sense, the Besicovitch pseudometric serves as a bridge between pointwise and measure-theoretic perspectives, allowing techniques from one setting to inform the other.

Within the measure-theoretic framework, we consider characteristic classes that were introduced by Kanigowski, Kułaga-Przymus, Lemańczyk, and de la Rue~\cite{KKPLT23} as a collection of measure-preserving systems stable under natural operations: taking factors and forming countable joinings. Such classes arise in a variety of contexts, often corresponding to systems sharing a common structural property or rigidity feature. In~\cite{BL}, it was established that if one considers a sequence of ergodic measures on a topological dynamical system drawn from a given characteristic class, then any $\bar\rho$–limit of these measures remains within the class. This closure property reflects a certain robustness of the defining features of the class under metric convergence.

The results of~\cite{BL} were formulated in the classical setting of $\mathbb{Z}$–actions. However, many problems in ergodic theory and topological dynamics naturally arise in the broader context of countable amenable group actions. Every countable abelian group, including $\mathbb{Z}$, is amenable, and the amenability property guarantees the existence of Følner sequences. These sequences play a role analogous to intervals in $\mathbb{Z}$–actions, making it possible to take averages and extend a wide range of arguments from the $\mathbb{Z}$–case to this more general setting. In particular, several dynamically generating pseudometrics—including the Besicovitch pseudometric—admit natural analogues for actions of countable amenable groups, retaining many of their key properties.

The purpose of this note is to extend the main result of~\cite{BL} to this more general framework. Specifically, we show that for ergodic measures arising from an action of a countable amenable group on a compact metric space, the set of measures belonging to a given characteristic class is closed with respect to the $\bar\rho$ metric. This demonstrates that the stability phenomenon observed in the $\mathbb{Z}$–setting persists across the full breadth of countable amenable group actions.


\section{Preliminaries}
\subsection{Basic Notation}
Throughout this paper $X$ is a compact metric space with a metric $d$ and $G$ is a countable group acting on $X$ by homeomorphisms. 


We denote the Borel $\sigma$-algebra of $X$ by $\mathcal{B}(X)$. Let $\mathcal{M}(X)$ be the collection of all Borel probability measures on $X$. Given a dynamical system $(X, G)$, we denote by $\mathcal{M}_G(X)$ the set of all $G$-invariant measures in $\mathcal{M}(X)$, that is $\mu\in \MG(X)$ if and only if for every $A\in \mathcal{B}(X)$ and every $g\in G$ one has $\mu(A)=\mu(g A)$. It is known that if $G$ is amenable (see Subsection~\ref{amenable}), then $\M_G(X)$ is non-empty. 
A measure $\mu\in\M_G(X)$ is \emph{ergodic} if  every $G$-invariant measurable set $B$ satisfies $\mu(B)=0$ or $\mu(B)=1$. We denote by $\M_G^e(X)$ the set of all ergodic measures on $X$. Each $\mu\in \MG(X)$ defines a measure-preserving system $(X,G,\mu)$. 


For each $x \in X$, the symbol $\delta_x$ denotes the Dirac measure supported on $\{x\}$. 
It is well known that the weak$^*$ topology on $\M(X)$ can be metrised by the Prokhorov metric (see \cite{Prokhorov}). We denote by $\HD$ the Hausdorff distance induced by the Prokhorov metric on the family of non-empty and compact (with respect to the weak$^*$ topology) subsets of $\M(X)$.

For any $x \in X$ and $\varepsilon > 0$, the open ball centred at $x$ with radius $\varepsilon$ is defined as
$B(x, \varepsilon) := \{z \in X \,:\, d(z, x) < \varepsilon\}.$
For a subset $B \subset X$, its boundary is denoted by $\partial B$, and its \emph{$\varepsilon$-neighbourhood} is defined as
$B^{\varepsilon}:= \{x \in X \,:\, \inf_{b \in B} d(x, b) < \varepsilon\}.$

\subsection{Amenable Groups}\label{amenable}
Let $G$ be a countable discrete group.  
A sequence of non-empty finite subsets $\F=(F_n)_{n\in\N}$ of $G$ is called a \emph{(left) \Folner sequence} if, for every $g\in G$,
\[
\lim_{n\to\infty}\frac{\lvert g F_n \,\Delta\, F_n\rvert}{\lvert F_n\rvert}=0,
\]
where $A\Delta B$ denotes the symmetric difference of the sets $A$ and $B$.  
A group $G$ is said to be \emph{amenable} if it admits a (left) \Folner sequence.
A basic example is the additive group of integers $\mathbb{Z}$, where one may take $F_n=\{0,1,\ldots,n\}$ for each $n\in\N$. More generally, every countable abelian group is amenable.
A sequence $\F=(F_n)_{n\in\N}$ is called \emph{(right) \Folner} if
\[
\lim_{n\to\infty}\frac{\lvert F_n g \,\Delta\, F_n\rvert}{\lvert F_n\rvert}=0,
\quad\text{for every } g\in G.
\]
It is called \emph{two-sided \Folner} sequence if it is both (left) and (right) \Folner.
It was shown in~\cite[Theorem~1'']{symmetric} that if an amenable group is unimodular—which is always the case for discrete groups—then it admits a two-sided \Folner sequence. By contrast, in the general (non-unimodular) case, one can construct sequences that are (left) \Folner but fail to be (right) \Folner; see~\cite{bjorklund2018ergodic} for explicit examples.
In this note, unless otherwise specified, ``\Folner sequence'' will refer to a (left) \Folner sequence.


\subsection{Generic Points}
Consider a \Folner sequence $\F=(F_n)_{n\in\N}$ in the group $G$. A point $x\in X$ is said to be \emph{generic} for a measure $\mu\in \M(X)$ along $\F$ if 
\begin{eqnarray}\label{fff}\frac{1}{\lvert F_n \rvert}\sum_{f\in F_n}\delta_{f x}\xrightarrow[n\to \infty]{}\mu\quad \text{in the weak$^*$ topology.}\end{eqnarray}
It is immediate that whenever~\eqref{fff} holds, we have $\mu\in \MG(X)$. We denote by $\Gen_{\F}(\mu)$ the (possibly empty) set of all generic points for $\mu$ along $\F$.

If equation \eqref{fff} holds along a subsequence $(F_{k_n})_{n\in\N}$ of the \Folner sequence $\F$, we say that $x$ is \emph{quasigeneric} for the measure $\mu$ along $\F$. Given $x\in X$, we define $\hat{\omega}_{\F}(x)\subseteq \MG(X)$ to be the set of all measures $\mu$ such that $x$ is quasigeneric for $\mu$ along $\F$. By compactness of $\M(X)$ with respect to the weak$^*$ topology, we have $\hat\omega_{\F}(x)\neq\emptyset$ for every $x\in X$. 

We call a \Folner sequence $\mathcal F=(F_n)_{n\in\N}$ \emph{tempered} if there exists a positive constant $C$ satisfying 
\[\Big|\bigcup_{k\leq n}F_k^{-1}F_{n+1}\Big|\leq C\cdot\left|F_{n+1}\right| \text{ for every }n\in\N.\]
Lindenstrauss~\cite[Proposition 1.5.]{Lindenstrauss01} demonstrated that every \Folner sequence has a tempered subsequence. Therefore, every amenable group admits a tempered \Folner sequence. Moreover, it follows from the Lindenstrauss pointwise ergodic theorem that, if $\F$ is a tempered \Folner sequence, then for every $\mu\in\M_G^e(X)$, one has $\mu(\Gen_{\F}(\mu))=1$.

\subsection{Factors and Joinings}
Let $(X,G,\mu)$ and $(Y,G,\nu)$ be measure-preserving systems, with the same acting group $G$. We say that $(Y,G,\nu)$ is a \emph{factor} of $(X,G,\mu)$ if there exists a measurable map $\pi:X\to Y$ such that $\pi(g x)= g\pi(x)$, for $\mu$ almost every $x\in X$ and every $g\in G$, and $\pi^{*}(\mu)=\nu$. 
Consider a family of measure-preserving systems $(X_i,\mu_i,G)_{i\in I}$, where $I\subseteq\N$. A \emph{joining} of the sequence of measures $(\mu_i)_{i\in I}$ is a measure on the Cartesian product $\Pi_{i\in I}X_i$, whose marginals on $X_i$'s are $\mu_i$'s and that
is invariant under the product action of $G$. Let $J(\mu_1,\mu_2,\ldots) $ and  $J^e(\mu_1,\mu_2,\ldots)$ denote the set of all joinings and ergodic joinings of the sequence of measures $(\mu_i)_{i\in\N}$,  respectively. 
The following theorem states that, in order to determine whether there exists a factor map between two measure-preserving systems, it is enough to check the existence of a special joining (see \cite[Chapter 6]{Glas03}).

\begin{thm}\label{thm:Factor}
Let $G$ be a countable discrete group, and let $(X,G,\mu)$ and $(Y,G,\nu)$ be two measure-preserving systems. Suppose there exists a joining $\lambda \in J(\mu, \nu)$ such that
\[
\{X, \emptyset\} \otimes \mathcal{B}(Y) \subset \mathcal{B}(X) \otimes \{Y, \emptyset\}\pmod{\lambda}.
\]
Then  $(Y,G,\nu)$ is a factor of $(X,G,\mu)$.
\end{thm}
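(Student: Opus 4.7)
The plan is to convert the $\sigma$-algebra containment modulo $\lambda$ into an honest measurable factor map $\pi\colon X\to Y$. The hypothesis unpacks concretely as: for every $B\in\mathcal{B}(Y)$ there exists $A_B\in\mathcal{B}(X)$, unique modulo $\mu$-null sets, with
\[
\lambda\bigl((A_B\times Y)\,\Delta\,(X\times B)\bigr)=0.
\]
Uniqueness is immediate from the identity $\mu(A\Delta A')=\lambda((A\Delta A')\times Y)$, and existence is exactly the statement of the hypothesis.

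The first step is to verify that the assignment $[B]\mapsto[A_B]$ descends to a measure-preserving Boolean $\sigma$-algebra homomorphism from the measure algebra of $(Y,\mathcal{B}(Y),\nu)$ into that of $(X,\mathcal{B}(X),\mu)$. Measure-preservation follows from $\mu(A_B)=\lambda(A_B\times Y)=\lambda(X\times B)=\nu(B)$, and compatibility with countable Boolean operations is routine, since countable unions of $\lambda$-null sets remain $\lambda$-null. Crucially, the $G$-invariance of $\lambda$ gives a Boolean equivariance: for every $g\in G$ and every $B\in\mathcal{B}(Y)$, applying the $\lambda$-preserving transformation $g^{-1}$ to the defining identity above yields $[A_{g^{-1}B}]=[g^{-1}A_B]$.

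Next I would invoke the point realization theorem for standard Borel probability spaces (both $X$ and $Y$ may be taken to be Polish, and in the present paper $X$ is even compact metric, so this applies): any measure-preserving Boolean $\sigma$-algebra homomorphism between their measure algebras is induced by a measurable map $\pi\colon X\to Y$, unique modulo $\mu$-null sets, with $\pi_{\ast}\mu=\nu$ and $\pi^{-1}(B)=A_B$ mod $\mu$ for every $B\in\mathcal{B}(Y)$. By the Boolean equivariance from the previous step, for each fixed $g\in G$ the maps $x\mapsto\pi(gx)$ and $x\mapsto g\pi(x)$ realize the same Boolean homomorphism, and therefore agree $\mu$-almost everywhere. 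Since $G$ is countable, intersecting these conull sets over all $g\in G$ produces a single $\mu$-conull set on which $\pi(gx)=g\pi(x)$ holds, as required for a factor map.

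The main substantive step is the point realization, which is where abstract $\sigma$-algebra data is genuinely converted into pointwise structure and which relies essentially on the standardness of the underlying Borel spaces. Everything else—uniqueness mod null sets, measure preservation, and the passage from Boolean to pointwise equivariance via countability of $G$—is bookkeeping once this is in hand.
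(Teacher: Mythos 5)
The paper does not prove this theorem itself but cites \cite[Chapter 6]{Glas03}, and your argument is precisely the standard proof given there: read the hypothesis as a measure-preserving, $G$-equivariant $\sigma$-homomorphism $[B]\mapsto[A_B]$ of measure algebras, realize it by a point map via von Neumann's point realization theorem on standard Borel spaces, and upgrade almost-everywhere equivariance for each $g$ to a single conull set using countability of $G$. All the steps check out, including the only substantive hypothesis you correctly isolate (standardness of the underlying spaces, automatic here since everything lives on compact metric spaces), so the proposal is correct and matches the intended proof.
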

Here, the notation `$\mathcal{Z}_1\subset \mathcal{Z}_2\pmod{\lambda}$' means that for every $C\in \mathcal{Z}_1$ there exists $D\in \mathcal{Z}_2$ such that $\lambda(C\Delta D)=0$.

\subsection{Characteristic classes}
The notion of a characteristic class was introduced in~\cite{KKPLT23} for $\Z$-actions. Here, we consider it in the broader setting of group actions.
\begin{defn}\label{Characteristic class}
A collection of measure-preserving systems $\mathscr{C}$ is called a \emph{characteristic class} if it is closed under the operations of taking factors and countable joinings. 
\end{defn}
Let $\mathscr{C}$ be a characteristic class. Denote by $\MC\subseteq \M_G(X)$ the set of measures $\mu$ for which $(X,G,\mu)\in \mathscr{C}$, and by $\MCe$ the subset of ergodic measures in $\MC$.

Similarly to the case of $\Z$-actions (see \cite[p. 16]{KKPLT23}) characteristic classes form for example:
\begin{itemize}
\item the family of all measure preserving $G$-actions on standard Borel probability spaces,
\item zero entropy actions,
\item actions with discrete spectrum,
\item actions with rational discrete spectrum,
\item the class of factors of all self-joinings of a fixed dynamical system (note that this is the smallest characteristic class that contains a given measure preserving system).
\end{itemize}
From now on we assume that $G$ is an amenable countable discrete group, so that \Folner sequences exist.
\subsection{The Upper Asymptotic Density}
For a finite non-empty set $F\subset G$ and $A\subset G$ let
\[
D_F(A)=|A\cap F|/|F|
\]
The \emph{upper asymptotic density} of $A$ with respect to the \Folner sequence $\mathcal F=(F_n)_{n\in\N}$ is defined as
\[\overline{d}_{\mathcal F}(A)=\limsup_{N\to\infty}D_{F_N}(A).\]

\subsection{The Besicovitch Pseudometric}
Given a \Folner sequence $\F=(F_n)_{n\in\N}$, the \emph{Besicovitch pseudometric} on $X$ is defined by
	\[
D_{B,\mathcal F}(x,z)=\limsup_{N\to\infty}\frac{1}{|F_N|}\sum_{g\in F_N}d(gx,gz) \text{ for }x, z\in X.
	\]
We will make use of the following results stated without proofs. For proofs or further details, see~\cite[Proposition 1, Corollary 1]{DI},~\cite[Corollary~11, Theorem 17]{LS}, \cite[Theorem 4, Theorem 7]{KLO2}, and \cite[Theorem 4]{LM}.

Denote by $X^G$ the space of all functions from $G$ to $X$, namely 
\[
X^G=\{\underline{x}_G\mid \underline{x}_G=(x_g)_{g\in G},\; x_g\in X\}.
\]
\begin{thm}\label{lem:db-prim}
Fix $\mathcal F$ and define
$D'_{B,\F}$ on $X^G$ as
\[
D'_{B,\mathcal F}(\und{x}_G,\und{z}_G)=\inf
\big\{\delta>0\mid \dbar_{\mathcal F}(\{g\in G\mid d(x_g,z_g)\ge \delta\})<\delta\big\}.
\]
Then $D_{B,\mathcal F}$ and $D_{B,\mathcal F}'$ are uniformly equivalent on $X^G$. Moreover, the modulus of uniform equivalence does not depend on the choice of a \Folner sequence.
\end{thm}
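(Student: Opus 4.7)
The pseudometric $D_{B,\F}$, defined originally on $X$, extends verbatim to $X^G$ by reading its formula with $x_g, z_g$ in place of $gx, gz$. With that extension, both $D_{B,\F}$ and $D_{B,\F}'$ are computed from the single sequence of \Folner averages $D_{F_N}$ applied to the pointwise distance $\phi(g):=d(x_g,z_g)$: the former is the asymptotic mean of $\phi$ along $\F$, and the latter measures the asymptotic upper density of its superlevel sets. Translating between the two is a Markov/Chebyshev-type exercise whose constants depend only on $\operatorname{diam}(X)$, never on $\F$.

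For $D_{B,\F}\to 0\;\Rightarrow\;D_{B,\F}'\to 0$, I would start from the pointwise Markov bound
\[
\delta\cdot\bigl|\{g\in F_N:\phi(g)\ge\delta\}\bigr|\le\sum_{g\in F_N}\phi(g),
\]
divide by $|F_N|$, and take $\limsup_{N\to\infty}$ to obtain $\dbar_{\F}\bigl(\{g:\phi(g)\ge\delta\}\bigr)\le D_{B,\F}(\und x_G,\und z_G)/\delta$ for every $\delta>0$. Whenever $\delta>\sqrt{D_{B,\F}(\und x_G,\und z_G)}$, the right-hand side is strictly less than $\delta$, so such a $\delta$ lies in the set whose infimum defines $D_{B,\F}'$; letting $\delta\searrow\sqrt{D_{B,\F}(\und x_G,\und z_G)}$ yields $D_{B,\F}'(\und x_G,\und z_G)\le\sqrt{D_{B,\F}(\und x_G,\und z_G)}$.

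For the reverse direction, I fix $\eta>D_{B,\F}'(\und x_G,\und z_G)$ and choose a witness $\delta\in(D_{B,\F}'(\und x_G,\und z_G),\eta)$ with $\dbar_{\F}(\{g:\phi(g)\ge\delta\})<\delta$. Splitting each \Folner average according to whether $\phi(g)<\delta$ or $\phi(g)\ge\delta$ gives
\[
\frac{1}{|F_N|}\sum_{g\in F_N}\phi(g)\le\delta+\operatorname{diam}(X)\cdot D_{F_N}\bigl(\{g:\phi(g)\ge\delta\}\bigr),
\]
and taking $\limsup_{N\to\infty}$ yields $D_{B,\F}(\und x_G,\und z_G)\le\delta(1+\operatorname{diam}(X))<\eta(1+\operatorname{diam}(X))$. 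Letting $\eta\searrow D_{B,\F}'(\und x_G,\und z_G)$ produces the linear bound $D_{B,\F}\le(1+\operatorname{diam}(X))\,D_{B,\F}'$.

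I do not foresee a real obstacle here. The only subtle point is the strict inequality in the definition of $D_{B,\F}'$, which forces one to work with $\delta$ strictly above the square root in the first direction, and strictly between $D_{B,\F}'$ and $\eta$ in the second, before passing to infima. Since neither estimate invokes any property of $\F$ beyond its being a sequence of finite subsets of $G$, the resulting moduli of uniform equivalence are automatically \Folner-independent, as claimed.
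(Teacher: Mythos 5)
Your argument is correct and complete: the Markov-type bound gives $D'_{B,\F}\le\sqrt{D_{B,\F}}$, the superlevel-set splitting gives $D_{B,\F}\le(1+\operatorname{diam}X)\,D'_{B,\F}$, and both moduli depend only on $\operatorname{diam}X$ (finite by compactness), hence not on $\F$. The paper states this theorem without proof, deferring to the cited references, and your Chebyshev-style computation is exactly the standard argument those sources use, including the correct handling of the reading of $D_{B,\F}$ on $X^G$ via $\phi(g)=d(x_g,z_g)$ and of the upward-closedness of the set of admissible $\delta$.
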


\begin{thm}\label{HD}
For every $\eps>0$, there exists $\delta>0$ such that for all $x,x'\in X$ with $D_{B,\mathcal F}(x,
 x')<\delta$, one has $\HD(\hat\omega_{\F}(x),\hat\omega_{\F}(x'))<\eps$. Moreover, the number $\delta$ does not depend on the choice of a \Folner sequence $\mathcal F$.
\end{thm}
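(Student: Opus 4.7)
The plan is to transport Besicovitch proximity of the points $x$ and $x'$ to Prokhorov proximity between their empirical averages, and then to take weak$^*$ limits. I will use the standard coupling bound for the Prokhorov metric: if $\lambda\in\M(X\times X)$ has marginals $\mu$ and $\nu$ and satisfies $\lambda(\{(y,y')\mid d(y,y')\ge\eta\})\le\eta$, then the Prokhorov distance $\pi(\mu,\nu)$ is at most $\eta$. This is immediate from the decomposition $\mu(B)=\lambda(\{y\in B,\;d(y,y')<\eta\})+\lambda(\{y\in B,\;d(y,y')\ge\eta\})\le\nu(B^\eta)+\eta$ valid for every Borel $B\subset X$.

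Given $\eps>0$, I would first invoke Theorem~\ref{lem:db-prim}: since $D_{B,\F}$ and $D'_{B,\F}$ are uniformly equivalent with a modulus not depending on $\F$, one may choose $\delta>0$ (independent of $\F$) so that $D_{B,\F}(x,x')<\delta$ forces $D'_{B,\F}(x,x')<\eps/2$. By the very definition of $D'_{B,\F}$, the set $A:=\{g\in G\mid d(gx,gx')\ge\eps/2\}$ then satisfies $\overline d_\F(A)<\eps/2$. Next, fix an arbitrary $\mu\in\hat\omega_\F(x)$, arising as the weak$^*$ limit of $\mu_n:=\tfrac{1}{|F_{k_n}|}\sum_{g\in F_{k_n}}\delta_{gx}$ along some subsequence $(F_{k_n})_{n\in\N}$, and form the parallel averages $\nu_n:=\tfrac{1}{|F_{k_n}|}\sum_{g\in F_{k_n}}\delta_{gx'}$. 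Using weak$^*$-compactness of $\M(X)$, I extract a further subsequence along which $\nu_n\to\mu'$, so that $\mu'\in\hat\omega_\F(x')$ by definition. The empirical coupling $\lambda_n:=\tfrac{1}{|F_{k_n}|}\sum_{g\in F_{k_n}}\delta_{(gx,gx')}$ has marginals $\mu_n,\nu_n$ and satisfies $\lambda_n(\{d\ge\eps/2\})=D_{F_{k_n}}(A)$, which drops below $\eps/2$ for $n$ large enough by the density bound above. The coupling lemma then yields $\pi(\mu_n,\nu_n)\le\eps/2$ for such $n$.

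Since $\pi$ metrises the weak$^*$ topology on $\M(X)$, the convergences $\mu_n\to\mu$ and $\nu_n\to\mu'$ together with the triangle inequality produce $\pi(\mu,\mu')\le\eps/2<\eps$. A symmetric argument, starting from an arbitrary $\mu'\in\hat\omega_\F(x')$, locates some $\mu\in\hat\omega_\F(x)$ within Prokhorov distance $<\eps$ of it; combining the two gives the Hausdorff estimate $\HD(\hat\omega_\F(x),\hat\omega_\F(x'))<\eps$. Independence of $\delta$ from $\F$ is inherited from Theorem~\ref{lem:db-prim}, since no later step in the argument introduces any additional dependence on the chosen \Folner sequence. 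The principal technical point to handle carefully is the passage to the weak$^*$ limit in the Prokhorov estimate; exploiting that $\pi$ genuinely metrises the weak$^*$ topology (rather than being merely lower semicontinuous with respect to it) allows this step to be dispatched with a single triangle inequality and avoids any Portmanteau-type juggling that a direct approach through the defining inequalities of $\pi$ would otherwise require.
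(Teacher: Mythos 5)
Your argument is correct. Note that the paper itself states Theorem~\ref{HD} without proof, deferring to the references ([LS], [KLO2]), so there is no in-text proof to compare against; your write-up is a clean, self-contained version of the standard argument those sources use. The three ingredients are all sound: the coupling bound $\lambda(\{d\ge\eta\})\le\eta\Rightarrow\pi(\mu,\nu)\le\eta$ (your displayed decomposition gives $\mu(B)\le\nu(B^{\eta})+\eta$, and the reverse inequality follows by exchanging the roles of the two marginals in the same coupling, which you should say explicitly since the Prokhorov metric requires both); the passage from $D_{B,\F}<\delta$ to $\bar d_{\F}(A)<\eps/2$ via Theorem~\ref{lem:db-prim}, where the monotonicity step (the infimum in $D'_{B,\F}$ being below $\eps/2$ yields some witness $\delta_0<\eps/2$, and $\{g: d(gx,gx')\ge\eps/2\}\subseteq\{g: d(gx,gx')\ge\delta_0\}$) is handled correctly; and the limit passage, where using that the Prokhorov metric genuinely metrises the weak$^*$ topology indeed lets you close the estimate with one triangle inequality rather than a portmanteau argument. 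Two cosmetic points: you implicitly identify a point $x\in X$ with its orbit $(gx)_{g\in G}\in X^{G}$ when invoking Theorem~\ref{lem:db-prim}, which is the intended reading but worth one sentence; and for the Hausdorff distance to be defined you should remark that $\hat\omega_{\F}(x)$ is a non-empty closed (hence compact) subset of $\M(X)$, which is standard for sets of subsequential limits. Neither affects the validity of the proof.
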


\begin{thm}\label{unequ}
If $\mathcal A$ is a finite set with discrete topology, then for every \Folner sequence $\mathcal F=(F_n)_{n\in\N}$ the Besicovitch pseudometric associated with any admissible metric on
the product space $\mathcal A^G$ endowed with the shift action of $G$ and the
\emph{$\bar{d}_{\mathcal F}$-pseudometric} on $\mathcal A^G$ given for $\underline{x}=(x_g)_{g\in G},\underline{z}=(z_g)_{g\in G}\in  \mathcal A^G$ by
\[
\bar{d}_{\mathcal F}(\underline{x},\underline{z})=\bar{d}_{\mathcal F}(\{g\in G \mid x_g\neq z_g\})=\limsup_{n\to\infty}\frac{\big|\{f\in F_n \mid x_f\neq z_f\}\big|}{|F_n|}
\]
are uniformly equivalent, and the modulus of uniform equivalence does not depend on $\mathcal F$.
\end{thm}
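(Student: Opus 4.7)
The plan is to leverage Theorem~\ref{lem:db-prim} to reduce the claim to an $\mathcal F$-independent comparison between $D'_{B,\mathcal F}$ and $\bar d_{\mathcal F}$ on $\mathcal A^G$. Since Theorem~\ref{lem:db-prim} already provides a uniform equivalence between $D_{B,\mathcal F}$ and $D'_{B,\mathcal F}$ with modulus not depending on $\mathcal F$, the problem becomes a purely combinatorial statement comparing densities of subsets of $G$.

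I would first extract two qualitative consequences of the admissibility of the fixed metric $d$ on $\mathcal A^G$. Because $\mathcal A$ is discrete, the finitely many cylinders $\{\underline\alpha\in\mathcal A^G:\alpha(e)=a\}$ with $a\in\mathcal A$ form a clopen partition of $\mathcal A^G$, so there is a constant $c_0>0$ depending only on $d$ such that $d(\underline\alpha,\underline\beta)<c_0$ forces $\alpha(e)=\beta(e)$. Dually, compactness of $\mathcal A^G$ together with continuity of $d$ yields, for every $\delta>0$, a finite window $K=K(\delta)\subseteq G$ such that $\alpha|_K=\beta|_K$ implies $d(\underline\alpha,\underline\beta)<\delta$. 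Both $c_0$ and $K(\delta)$ depend only on $d$, not on $\mathcal F$.

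The easy direction is immediate: if $D'_{B,\mathcal F}(\underline\phi_G,\underline\psi_G)<\delta<c_0$, pick $\delta'<\delta$ with $\bar d_{\mathcal F}(\{g:d(g\phi,g\psi)\ge\delta'\})<\delta'$; by the first observation, every $g$ with $\phi(g)\ne\psi(g)$ satisfies $d(g\phi,g\psi)\ge c_0>\delta'$, so $\{g:\phi(g)\ne\psi(g)\}$ lies in that set and hence $\bar d_{\mathcal F}(\phi,\psi)<\delta$. For the harder direction, assume $\bar d_{\mathcal F}(\phi,\psi)<\eta$, set $D=\{g:\phi(g)\ne\psi(g)\}$, fix $\eps>0$, and take $\delta=\eps/2$ with $K=K(\delta)$. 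By the second observation, $d(g\phi,g\psi)\ge\delta$ forces $(g\phi)|_K\ne(g\psi)|_K$; with the shift-action convention $(g\phi)(h)=\phi(hg)$ (chosen to match the left-\Folner property), this translates into $g\in k^{-1}D$ for some $k\in K$, so $\{g:d(g\phi,g\psi)\ge\delta\}\subseteq K^{-1}D$. Each left-translate $k^{-1}D$ has the same upper $\bar d_{\mathcal F}$-density as $D$ by the left-\Folner condition, and hence $\bar d_{\mathcal F}(K^{-1}D)\le|K|\cdot\bar d_{\mathcal F}(D)<|K|\eta$. Choosing $\eta<\delta/|K|$ gives $D'_{B,\mathcal F}(\underline\phi_G,\underline\psi_G)\le\delta<\eps$.

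The main obstacle is the bookkeeping in the hard direction: one must realise $\{g:d(g\phi,g\psi)\ge\delta\}$ as a finite union of left-translates of $D$ so that the left-\Folner property can absorb the translates. This is precisely where the shift-action convention must be chosen in harmony with the direction of the \Folner sequence (the other convention would produce translates of $D^{-1}$, which a left-\Folner sequence does not control). Once this is set up, the $\mathcal F$-independence of the modulus of uniform equivalence follows automatically, since $c_0$, $K(\delta)$ and the implicit function $\eta\mapsto\delta$ depend only on the admissible metric $d$.
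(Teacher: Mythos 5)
The paper does not prove Theorem~\ref{unequ}; it is quoted from the literature (\cite{LS}, \cite{KLO2}), so there is no in-paper argument to compare against. Your proof is correct and is essentially the standard one behind those citations: the reduction through $D'_{B,\mathcal F}$, the two admissibility facts (positive separation of the clopen cylinders over the identity coordinate, and a finite window $K(\delta)$ controlling $d$), and the key step of covering $\{g : d(g\phi,g\psi)\ge\delta\}$ by the finitely many left translates $k^{-1}D$, $k\in K$, whose upper densities are preserved by the left \Folner property. You are also right to flag the shift convention $(g\phi)(h)=\phi(hg)$ as the point where the argument could silently break; with it, all constants depend only on the admissible metric, so the modulus is independent of $\mathcal F$ as claimed.
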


  
\begin{thm}\label{thm:Besicovitch-quasi-genericity-and-ergodicity}
Let $(\mu_p)_{p \in \N}\subseteq\mathcal M_G^e(X)$, $(x_p)_{p\in \N}\subseteq X^{\infty}$ and $x\in X$ be such that $D_{B,\F}(x_p,x)\to 0$ and for every $p\in\N$ one has $x_p\in\Gen_{\F}(\mu_p)$. Then there exists $\mu\in\M_G^e(X)$ such that $x \in \Gen_{\F}(\mu)$.
\end{thm}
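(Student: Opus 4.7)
The plan has two halves: first, to identify a candidate measure $\mu$ and prove $x\in\Gen_\F(\mu)$; then to verify ergodicity of $\mu$.

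For the first half, I would invoke Theorem~\ref{HD}: the hypothesis $D_{B,\F}(x_p,x)\to 0$ yields $\HD(\hat\omega_\F(x_p),\hat\omega_\F(x))\to 0$. The hypothesis $x_p\in\Gen_\F(\mu_p)$ says exactly that $\hat\omega_\F(x_p)=\{\mu_p\}$. Consequently $\hat\omega_\F(x)$ is contained in Prokhorov-balls around $\mu_p$ whose radii tend to $0$, so its Prokhorov-diameter is bounded by $2\HD(\{\mu_p\},\hat\omega_\F(x))\to 0$. Since $\hat\omega_\F(x)$ is closed and non-empty by weak$^*$-compactness of $\M(X)$, it reduces to a single point $\{\mu\}$ and $\mu_p\to\mu$ in the weak$^*$ topology. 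Every subsequential weak$^*$ limit of $\frac{1}{|F_n|}\sum_{f\in F_n}\delta_{fx}$ lies in $\hat\omega_\F(x)=\{\mu\}$, so the full sequence converges to $\mu$, giving $x\in\Gen_\F(\mu)$; the invariance $\mu\in\MG(X)$ is then automatic.

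For the second half, the idea is to upgrade the weak$^*$ convergence $\mu_p\to\mu$ to convergence in the $\bar\rho$ metric via empirical joinings. For each pair $p,q\in\N$ I would form the measures on $X\times X$ given by
\[
\lambda_{p,q}^n:=\frac{1}{|F_n|}\sum_{f\in F_n}\delta_{(fx_p,\,fx_q)}.
\]
Genericity of $x_p$ and $x_q$ ensures that any weak$^*$-cluster point $\lambda_{p,q}$, taken along a common subsequence of $n$, is a $G$-invariant joining of $\mu_p$ and $\mu_q$. Continuity of $d$ on $X\times X$ then gives
\[
\int d\,d\lambda_{p,q}\le D_{B,\F}(x_p,x_q)\le D_{B,\F}(x_p,x)+D_{B,\F}(x,x_q)\xrightarrow[p,q\to\infty]{}0,
\]
so $(\mu_p)$ is Cauchy in the $\bar\rho$-metric on $\MG(X)$. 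Since $\bar\rho$ dominates the weak$^*$ topology, its $\bar\rho$-limit must coincide with the weak$^*$-limit $\mu$.

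The main obstacle is the final step: concluding that a $\bar\rho$-limit of ergodic measures is itself ergodic. This closure property of $\MG^e(X)$ in $\bar\rho$ is the deep input, classical for $\Z$-actions (cf.\ the Ornstein–Shields theory of the $\bar d$-metric) and available for countable amenable group actions through the results cited in the preliminaries (see~\cite{KLO2,LM}). Granting it, $\mu\in\MG^e(X)$, and the proof is complete.
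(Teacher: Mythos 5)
First, a caveat on the comparison: the paper does not prove this statement at all --- it appears among the preliminaries ``stated without proofs'', with the reader referred to \cite{DI}, \cite{LS}, \cite{KLO2}, \cite{LM}. So your attempt can only be judged on its own merits. Your first half is correct and is the natural argument: Theorem~\ref{HD} together with $\hat\omega_{\F}(x_p)=\{\mu_p\}$ forces the Prokhorov diameter of $\hat\omega_{\F}(x)$ to vanish, hence $\hat\omega_{\F}(x)=\{\mu\}$ is a singleton, and compactness of $\M(X)$ upgrades uniqueness of subsequential limits to $x\in\Gen_{\F}(\mu)$.

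The second half has two gaps. The smaller one: you show $(\mu_p)$ is $\bar\rho$-Cauchy and then speak of ``its $\bar\rho$-limit'', which presupposes completeness of $(\MG(X),\bar\rho)$ --- a fact you neither prove nor cite. This is easily repaired and the detour through Cauchyness is unnecessary: having already shown $x\in\Gen_{\F}(\mu)$, take weak$^*$ cluster points of the empirical measures along the pair $(x_p,x)$; by the argument of Lemma~\ref{lem:inJ} (which does not need ergodicity of the second measure) these lie in $J(\mu_p,\mu)$ and integrate $d$ to at most $D_{B,\F}(x_p,x)$, so $\bar\rho(\mu_p,\mu)\to 0$ directly, exactly as in Theorem~\ref{nowy}. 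The larger problem is that the entire content of the conclusion --- ergodicity of $\mu$ --- is delegated to ``the deep input \dots available through the results cited in the preliminaries''. The $\bar\rho$-closedness of $\M_G^e(X)$ is precisely the hard part of this theorem, and asserting its availability without proof or a pinpoint citation leaves the argument incomplete. To close it: choose joinings $\lambda_p\in J(\mu_p,\mu)$ with $\int d\,\mathrm{d}\lambda_p=\eps_p\to 0$ and disintegrate each into ergodic components. Since $\mu_p$ is extremal, almost every component has first marginal $\mu_p$, while the second marginals realise the ergodic decomposition $\kappa$ of $\mu$; Markov's inequality shows $\kappa$ gives mass at least $1-\sqrt{\eps_p}$ to the $\bar\rho$-ball of radius $\sqrt{\eps_p}$ about $\mu_p$. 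Passing to a subsequence with $\sum_p\sqrt{\eps_p}<\infty$ and applying Borel--Cantelli, $\kappa$-almost every ergodic component is a $\bar\rho$-limit of $(\mu_p)$, hence by Lemma~\ref{ozn} they all coincide; thus $\kappa$ is a point mass and $\mu$ is ergodic. Without some such argument (or an exact reference containing it), the proposal proves genericity but not ergodicity.
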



\section{The $\bar\rho$ Metric on the Space of $G$-invariant Measures}\label{Secmetric}
In this section, we prove that the Besicovitch pseudometric induces a metric on the space of $G$-invariant measures.
Our approach begins with an alternative definition of the distance between two measures, formulated so as to be independent of the choice of a \Folner sequence. This leads naturally to the function $\bar\rho$ (see Definition~\ref{dddd}). For $\mathbb{Z}$–actions, a proof that $\bar\rho$ is a metric is included in the thesis~\cite{Babel}, and the argument is similar for $G$-actions; we present it here in full for completeness. The properties of $\bar\rho$ have been examined in detail in~\cite{Spec} for $\Z$-actions and in~\cite{KLM} for amenable group actions.

\begin{defn}\label{dddd}
For $\mu,\nu \in \M_G(X)$ define
\[
\bar\rho(\mu,\nu):=\inf_{\lambda\in J(\mu,\nu)}\int_{X\times X} d\;\text{d}\lambda.
\]
\end{defn}

\begin{rem}
In Definition~\ref{dddd}, the infimum is in fact a minimum, since 
$J(\mu,\nu)$ is weak$^*$ compact and the functional
\[
\lambda \,\longmapsto\, \int_{X\times X} \rho(a,b)\, d\lambda(a,b),
\quad \lambda \in \mathcal{M}_G(X\times X),
\]
is weak$^*$ continuous. Moreover, the minimum can be achieved by an ergodic joining.
\end{rem}
\begin{rem}
It is evident that $\bar\rho$ is a symmetric function from $\M_G(X)\times\M_G(X)$ to $\mathbb R_+$. 
\end{rem}
\begin{lem}\label{ozn}
	One has $\bar\rho(\mu, \nu)=0$ if and only if $\mu=\nu$.
\end{lem}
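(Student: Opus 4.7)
The plan is to prove the two implications separately, using Definition~\ref{dddd} together with the remark that the infimum is attained. The main point is that on a metric space the integral of $d$ against a joining vanishes if and only if the joining is concentrated on the diagonal.

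For the easy direction, suppose $\mu=\nu$. I would exhibit an explicit joining realising the value $0$: consider the diagonal measure $\lambda_{\Delta}$ defined on Borel rectangles by $\lambda_{\Delta}(A\times B)=\mu(A\cap B)$, equivalently the push-forward of $\mu$ under $x\mapsto(x,x)$. Since $G$ acts diagonally on $X\times X$ and $\mu$ is $G$-invariant, $\lambda_{\Delta}$ is $G$-invariant; both marginals equal $\mu=\nu$, so $\lambda_{\Delta}\in J(\mu,\nu)$. As $\lambda_{\Delta}$ is supported on the diagonal and $d(x,x)=0$, we get $\int_{X\times X}d\,\mathrm{d}\lambda_{\Delta}=0$, hence $\bar\rho(\mu,\nu)=0$.

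For the converse, assume $\bar\rho(\mu,\nu)=0$. By the remark following Definition~\ref{dddd}, the infimum is attained, so there exists $\lambda\in J(\mu,\nu)$ with $\int_{X\times X}d(a,b)\,\mathrm{d}\lambda(a,b)=0$. Since $d\geq 0$, this forces $d(a,b)=0$ for $\lambda$-almost every $(a,b)\in X\times X$; as $d$ is a metric, this means $\lambda$ is concentrated on the diagonal $\Delta=\{(x,x):x\in X\}$. Then for every Borel set $A\subseteq X$,
\[
\mu(A)=\lambda(A\times X)=\lambda\bigl((A\times X)\cap\Delta\bigr)=\lambda\bigl((X\times A)\cap\Delta\bigr)=\lambda(X\times A)=\nu(A),
\]
so $\mu=\nu$.

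I do not expect a serious obstacle here; the only subtlety is the measurability of $\Delta$ (which holds because $X$ is metric, hence $\Delta$ is closed) and confirming that the diagonal measure is a legitimate element of $J(\mu,\mu)$, both of which are routine. The existence of a minimiser, which powers the harder direction, is already granted by the preceding remark, so the argument reduces to a short computation once $\lambda$ is known to be supported on $\Delta$.
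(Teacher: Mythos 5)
Your proof is correct and follows essentially the same route as the paper: the diagonal joining for the forward direction, and for the converse the attained minimiser must be concentrated on the diagonal, which forces the marginals to coincide. You simply spell out the details that the paper leaves implicit.
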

\begin{proof}
	The ``if'' part is obvious. To demonstrate the ``only if'' part, note that if $\bar\rho(\mu,\nu)=0$, then there exists a joining of $\mu$ and $\nu$ which is supported on the diagonal (that is on the set $\{(x,z)\in X\times X\,:\,x=z\}$).
\end{proof}

In the proof of Lemma~\ref{triangle}, we mimic the reasoning from \cite{Glas03,Villani}.

\begin{lem}\label{triangle}
The function $\bar\rho\colon\M_G(X)\times\M_G(X)\to\mathbb R _+$ satisfies the triangle inequality.
\end{lem}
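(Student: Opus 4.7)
The plan is to follow the classical gluing-of-joinings argument. Fix $\mu_1,\mu_2,\mu_3\in \M_G(X)$ and pick (by weak$^*$ compactness of the joining sets, as noted in the preceding remark) optimal joinings $\lambda_{12}\in J(\mu_1,\mu_2)$ and $\lambda_{23}\in J(\mu_2,\mu_3)$ with
\[
\int_{X\times X} d\,\dd\lambda_{12}=\bar\rho(\mu_1,\mu_2),\qquad
\int_{X\times X} d\,\dd\lambda_{23}=\bar\rho(\mu_2,\mu_3).
\]
The core step is to \emph{glue} these two joinings along the common marginal $\mu_2$, producing a measure $\lambda_{123}$ on $X\times X\times X$ whose projection to the first two coordinates is $\lambda_{12}$ and whose projection to the last two coordinates is $\lambda_{23}$.

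Since $X$ is a compact metric (hence Polish) space, we may disintegrate $\lambda_{12}$ and $\lambda_{23}$ over the middle factor $(X,\mu_2)$: there exist Borel families $(\lambda_{12}^{y})_{y\in X}$ on $X$ and $(\lambda_{23}^{y})_{y\in X}$ on $X$, defined for $\mu_2$-almost every $y$, such that
\[
\lambda_{12}=\int_X \lambda_{12}^{y}\otimes\delta_y\,\dd\mu_2(y),\qquad
\lambda_{23}=\int_X \delta_y\otimes\lambda_{23}^{y}\,\dd\mu_2(y).
\]
Define
\[
\lambda_{123}:=\int_X \lambda_{12}^{y}\otimes\delta_y\otimes\lambda_{23}^{y}\,\dd\mu_2(y).
\]
A routine verification shows that the $(1,2)$- and $(2,3)$-marginals of $\lambda_{123}$ are $\lambda_{12}$ and $\lambda_{23}$ respectively, and that the $(1,3)$-marginal, call it $\lambda_{13}$, belongs to $J(\mu_1,\mu_3)$. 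To check $G$-invariance, one uses that both $\lambda_{12}$ and $\lambda_{23}$ are $G$-invariant: for each $g\in G$, the family $(g_*\lambda_{12}^{y})_{y}$ disintegrates $g_*\lambda_{12}=\lambda_{12}$ over $g_*\mu_2=\mu_2$, so by uniqueness of disintegration one has $g_*\lambda_{12}^{y}=\lambda_{12}^{gy}$ for $\mu_2$-a.e.\ $y$, and likewise for $\lambda_{23}^{y}$; substituting into the formula for $\lambda_{123}$ and changing variables $y\mapsto gy$ (which preserves $\mu_2$) yields $g_*\lambda_{123}=\lambda_{123}$.

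With $\lambda_{13}\in J(\mu_1,\mu_3)$ in hand, the triangle inequality for $d$ on $X$ together with Fubini gives
\[
\bar\rho(\mu_1,\mu_3)\le\int_{X\times X} d(x_1,x_3)\,\dd\lambda_{13}
\le\int_{X^3}\bigl(d(x_1,x_2)+d(x_2,x_3)\bigr)\,\dd\lambda_{123}
=\bar\rho(\mu_1,\mu_2)+\bar\rho(\mu_2,\mu_3),
\]
which is the desired inequality. The only nontrivial point is the invariance of the glued measure; everything else amounts to manipulating disintegrations and applying the pointwise triangle inequality.
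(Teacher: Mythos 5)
Your proposal is correct and follows essentially the same route as the paper: both glue the two optimal joinings via disintegration over the common middle marginal (the relatively independent joining over $\mu_2$), project onto the outer coordinates, and apply the pointwise triangle inequality for $d$. The equivariance argument $g_*\lambda_{12}^{y}=\lambda_{12}^{gy}$ that you spell out is a detail the paper leaves implicit, and it is handled correctly.
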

\begin{proof}
Let $\mu, \nu, \eta\in\M_G(X)$. Assume that $\pi_{1,2}\in J(\mu,\eta), \pi_{2,3}\in J(\eta, \nu)$ are such that 
\[
\bar\rho(\mu,\eta)=\int d(x,y)\,\text{d}\pi_{1,2}(x,y)\quad\text{ and }\quad\bar\rho(\eta,\nu)=\int d(y,z)\,\text{d}\pi_{2,3}(y,z).
\]
Let $\pi^{1,2}\colon y\mapsto\pi_y^{1,2}$ and $\pi^{2,3}\colon y\mapsto\pi_y^{2,3}$ be respectively disintegrations of $\pi_{1,2}$ and $\pi_{2,3}$ over $\eta$ (using factor maps $\text{proj}_{1,2}\colon X\times X\to X$, $\text{proj}_{2,3}\colon X\times X\to X$).
Define $\zeta\in \M(X\times X\times X)$ by \[\zeta=\int_X\pi_y^{1,2}\otimes\delta_y\otimes\pi_y^{2,3}\,\text{d}\eta(y).\]
Then ${\text{proj}_{1,2}}_*(\zeta)=\pi_{1,2}$ and
${\text{proj}_{2,3}}_*(\zeta)=\pi_{2,3}$.
Let $\pi_{1,3}={\text{proj}_{1,3}}_*(\zeta)$\footnote[2]{$\pi^{1,3}$ is known as \emph{relatively independent joining} of $\pi^{1,2}$ and $\pi^{1,3}$ over $\eta$ (see \cite[p.~126]{Glas03}).}. Then $\pi_{1,3}\in J(\mu,\nu)$ and
\begin{multline*}
\bar\rho(\mu,\nu)\leq\int_{X\times X}d(x,z)\,\text{d}\pi_{1,3}(x,z)=\int_{X\times X\times X}d(x,z)\,\text{d}\zeta (x,y,z) \\\leq\int_{X\times X\times X}d(x,y)+d(y,z)\,\text{d}\zeta (x,y,z)\\=\int_{X\times X}d(x,y)\,\text{d}\pi_{1,2}(x,y)+\int_{X\times X}d(y,z)\,\text{d}\pi_{2,3}(y,z)=\bar\rho(\mu, \eta)+\bar\rho(\eta, \nu).\qedhere
\end{multline*} 
\end{proof}
\begin{cor}
The function $\bar\rho$ is a metric on $\M_G(X)$.
\end{cor}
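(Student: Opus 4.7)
The corollary is essentially a bookkeeping statement: all four defining properties of a metric have already been verified in the preceding material, and the plan is simply to assemble them.

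First I would observe that $\bar\rho$ is nonnegative and real-valued by construction, since $d\colon X\times X\to\R_+$ is a genuine metric and any joining $\lambda\in J(\mu,\nu)$ is a probability measure on the compact space $X\times X$, so the integral $\int d\,\mathrm d\lambda$ is a nonnegative real number; taking the infimum over the nonempty set $J(\mu,\nu)$ (which contains at least the product measure $\mu\otimes\nu$) keeps the result in $\R_+$. Next, symmetry $\bar\rho(\mu,\nu)=\bar\rho(\nu,\mu)$ follows from the fact that $d$ is symmetric and that the map $(x,y)\mapsto(y,x)$ induces a bijection $J(\mu,\nu)\to J(\nu,\mu)$ that preserves the value of the integral; this was noted in the remark immediately following Definition~\ref{dddd}.

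Then I would invoke Lemma~\ref{ozn} for the identity of indiscernibles ($\bar\rho(\mu,\nu)=0$ iff $\mu=\nu$) and Lemma~\ref{triangle} for the triangle inequality. Together these four facts are precisely what is needed to conclude that $\bar\rho$ is a metric on $\M_G(X)$.

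There is no real obstacle here, since the substantive work was done in the two preceding lemmas; the corollary is a one-line deduction.

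\begin{proof}
By construction $\bar\rho$ takes values in $\R_+$, and it is symmetric as observed in the remark following Definition~\ref{dddd}. Lemma~\ref{ozn} gives $\bar\rho(\mu,\nu)=0$ if and only if $\mu=\nu$, and Lemma~\ref{triangle} provides the triangle inequality. Hence $\bar\rho$ is a metric on $\M_G(X)$.
\end{proof}
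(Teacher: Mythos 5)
Your proof is correct and matches the paper exactly: the corollary is stated there without proof precisely because it is the immediate assembly of the preceding remark (symmetry and nonnegativity), Lemma~\ref{ozn} (identity of indiscernibles), and Lemma~\ref{triangle} (triangle inequality). Nothing further is needed.
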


We introduce the function $\overline{D}_{B,\F}$ on the space of $G$-invariant ergodic measures, obtained as the metric induced by the Besicovitch pseudometric (see Definition~\ref{metr}).
The construction is based on two-sided tempered Følner sequence. We prove that the resulting function is indeed a metric on $\M_G^e(X)$ and coincides with $\bar\rho$.
This identification allows us to reduce the study of the system to the dynamics of a single orbit.
The argument follows the approach of~\cite{Shields}. 

The following Lemma~\ref{lem:inJ} is straightforward, and we omit its proof.

\begin{lem}\label{lem:inJ}
  Let $\F$ be a \Folner sequence. If $\mu,\nu\in\M_G^e(X)$, $x\in \Gen_{\F}(\mu)$, $z\in\Gen_{\F}(\nu)$ and $\lambda\in\hat\omega_{\F}{(x,z)}$, then $\lambda\in J(\mu,\nu)$.
  \end{lem}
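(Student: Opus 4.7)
The plan is to verify directly the three defining conditions of a joining: that $\lambda$ is invariant under the diagonal $G$-action on $X\times X$, that its first marginal equals $\mu$, and that its second marginal equals $\nu$. None of these steps requires machinery beyond unpacking definitions and using continuity of pushforward, so I do not expect a genuine obstacle; the only thing to be careful about is exploiting the right asymmetry between ``generic'' and ``quasi-generic''.

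First, I would unfold the hypothesis $\lambda\in\hat\omega_{\F}(x,z)$. By definition, this supplies a subsequence $(F_{k_n})_{n\in\N}$ of $\F$ such that
\[
\frac{1}{|F_{k_n}|}\sum_{g\in F_{k_n}}\delta_{(gx,gz)}\xrightarrow[n\to\infty]{}\lambda
\]
in the weak$^*$ topology on $\M(X\times X)$. Invariance of $\lambda$ under the diagonal action is then immediate from the standard \Folner-averaging argument already invoked right after~\eqref{fff}, applied now to the product system $(X\times X,G)$ with the diagonal action; in particular $\lambda\in\M_G(X\times X)$.

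Next, I would push the empirical averages forward under the coordinate projections $\pi_1,\pi_2\colon X\times X\to X$. Since each $\pi_i$ is continuous, the pushforward $(\pi_i)_*$ is weak$^*$ continuous, so
\[
(\pi_1)_*\lambda=\lim_{n\to\infty}\frac{1}{|F_{k_n}|}\sum_{g\in F_{k_n}}\delta_{gx}
\quad\text{and}\quad
(\pi_2)_*\lambda=\lim_{n\to\infty}\frac{1}{|F_{k_n}|}\sum_{g\in F_{k_n}}\delta_{gz}.
\]
The key observation is that $x\in\Gen_{\F}(\mu)$ means the full sequence $\tfrac{1}{|F_n|}\sum_{g\in F_n}\delta_{gx}$ converges to $\mu$, so every subsequence also converges to $\mu$; hence $(\pi_1)_*\lambda=\mu$. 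Symmetrically, $z\in\Gen_{\F}(\nu)$ forces $(\pi_2)_*\lambda=\nu$. Together with the invariance established above, this shows $\lambda\in J(\mu,\nu)$.

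The reason the argument is essentially painless is precisely the asymmetry between the two types of hypotheses: if $(x,z)$ were only required to be \emph{quasi}-generic for its marginals, the limit measures along two different subsequences could fail to agree with $\mu$ and $\nu$ along the particular subsequence $(F_{k_n})$ given by $\hat\omega_{\F}(x,z)$. The assumption that $x$ and $z$ are fully generic removes this potential mismatch, and no further work is needed.
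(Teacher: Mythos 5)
Your proof is correct, and the paper in fact omits the proof of this lemma entirely, labelling it ``straightforward''; your direct verification is exactly the intended argument. The three steps check out: a subsequence of a \Folner sequence is again a \Folner sequence, so the averaging argument gives diagonal invariance of $\lambda$; weak$^*$ continuity of the pushforwards under the coordinate projections identifies the marginals of $\lambda$ with subsequential limits of the empirical measures of $x$ and $z$; and full genericity (not mere quasigenericity) of $x$ and $z$ forces those subsequential limits to be $\mu$ and $\nu$, which is precisely the asymmetry you correctly flag as the crux.
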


\begin{rem}
Note that the lemma above is meaningful only when the sets $\Gen_{\F}(\mu)$ and $\Gen_{\F}(\nu)$ are non-empty. This is always the case when $\mu,\nu\in\M_G^e(X)$ and $\F$ is tempered; however for given $\mu$ and $\nu$, it may also hold for other \Folner sequences.
  \end{rem}

\begin{defn}
Let $\nu\in\M_G^e(X)$, let $\F$ be a two-sided tempered \Folner sequence, and let $x\in X$. Define
\[	
\overline{D}_{B,\F}(x,\nu):=\inf\big\{D_{B,\F}(x,z)\,\mid\,z\in\Gen_{\F}(\nu)\big\}.
\]
\end{defn}

\begin{lem}\label{lem:constant}
Let $\mu,\nu\in\M_G^e(X)$ and let $\F$ be a two-sided \Folner sequence. Then the function 
\[
\Gen_{\F}(\mu)\ni x\mapsto \overline{D}_{B,\F}(x,\nu)\in\mathbb R_+\]
is constant $\mu$-almost everywhere.
\end{lem}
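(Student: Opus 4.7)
The plan is to show that $\phi(x) := \overline{D}_{B,\F}(x,\nu)$ is $G$-invariant on all of $X$, and then conclude by ergodicity of $\mu$. The two-sided Følner hypothesis enters precisely through the right Følner property, which controls the substitution $f\mapsto fg$ inside asymptotic averages.

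First I would verify that $\Gen_{\F}(\nu)$ is itself $G$-invariant. Given $z\in\Gen_{\F}(\nu)$ and $g\in G$, the substitution $h=fg$ gives
\[
\frac{1}{|F_N|}\sum_{f\in F_N}\delta_{fgz}=\frac{1}{|F_N|}\sum_{h\in F_N g}\delta_{hz},
\]
and this differs from $\frac{1}{|F_N|}\sum_{h\in F_N}\delta_{hz}$ in total variation by at most $|F_Ng\Delta F_N|/|F_N|$, which tends to $0$ by the right Følner property. Hence $gz\in\Gen_{\F}(\nu)$. The same substitution, combined with the boundedness of $d$ on the compact space $X$, yields the translation identity $D_{B,\F}(gx,z)=D_{B,\F}(x,g^{-1}z)$ for all $x,z\in X$ and $g\in G$, since
\[
\Bigl|\frac{1}{|F_N|}\sum_{h\in F_N g}d(hx,hg^{-1}z)-\frac{1}{|F_N|}\sum_{h\in F_N}d(hx,hg^{-1}z)\Bigr|\le \frac{|F_Ng\Delta F_N|}{|F_N|}\cdot\operatorname{diam}(X)\to 0.
\]

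Combining these observations, for every $g\in G$ and $x\in X$,
\[
\phi(gx)=\inf_{z\in\Gen_{\F}(\nu)}D_{B,\F}(gx,z)=\inf_{z\in\Gen_{\F}(\nu)}D_{B,\F}(x,g^{-1}z)=\inf_{w\in\Gen_{\F}(\nu)}D_{B,\F}(x,w)=\phi(x),
\]
using that $z\mapsto g^{-1}z$ is a bijection of $\Gen_{\F}(\nu)$ onto itself by the first step. Thus $\phi$ is $G$-invariant on all of $X$.

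The main technical obstacle is the measurability of $\phi$, since the defining infimum runs over the uncountable set $\Gen_{\F}(\nu)$. I would address this by observing that $\Gen_{\F}(\nu)$ is Borel (it is the set on which $\tfrac{1}{|F_N|}\sum_{f\in F_N}h(fz)\to\int h\,d\nu$ for every $h$ in a countable dense subset of $C(X)$), and that $D_{B,\F}\colon X\times X\to\R_+$ is Borel as a $\limsup$ of continuous functions. Therefore, for each $c\in\R$, the set $\{x:\phi(x)<c\}$ is the projection to $X$ of a Borel subset of $X\times\Gen_{\F}(\nu)$; such a projection is analytic, hence universally measurable and in particular $\mu$-measurable. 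Combined with the $G$-invariance established above, ergodicity of $\mu$ forces $\phi$ to be $\mu$-a.e.\ constant on $X$, and restricting to $\Gen_{\F}(\mu)$ gives the claim.
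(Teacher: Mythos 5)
Your proof is correct and follows essentially the same route as the paper's: the two-sided (right) F{\o}lner property yields $G$-invariance of $x\mapsto \overline{D}_{B,\F}(x,\nu)$, and ergodicity of $\mu$ then forces almost-everywhere constancy. You additionally supply details the paper leaves implicit, notably the universal measurability of the infimum via analyticity of $\{x:\phi(x)<c\}$, which is a worthwhile point but does not change the underlying argument.
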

\begin{proof}
Since the \Folner sequence is two-sided, we have that $\overline{D}_{B,\F}(gx,\nu)=\overline{D}_{B,\F}(x,\nu)$ for all $g\in G$ and $x\in X$. In other words, the function $X\ni x\to \overline{D}_{B,\F}(x,\nu)$ is $G$-invariant. Therefore the claim follows from the ergodicity of $\mu$.
\end{proof}
\begin{defn}\label{metr}
For a two-sided tempered \Folner sequence $\F$ and $\mu,\nu\in\M_G^e(X)$, we define $\overline{D}_{B,\F}(\mu,\nu)$ to be the constant value obtained in Lemma~\ref{lem:constant}.
\end{defn}
In Theorem~\ref{nowy}, we show that $\bar\rho$ and $\overline{D}_{B, \F}$ coincide on the set $\M_G^e(X)$. In particular, this implies that $\overline{D}_{B, \F}$ is a metric on $\M_G^e(X)$ and does not depend on the choice of a two-sided tempered \Folner sequence.
\begin{thm}\label{nowy}
 Let $\mu, \nu\in\M_G^e(X)$. Fix a \Folner sequence $\F$ (not necessarily two-sided, nor tempered). Then for all $x\in\Gen_{\F}(\mu)$ and $z\in\Gen_{\F}(\nu)$ we have  $D_{B,\F}(x,z)\geq \bar\rho(\mu,\nu)$.
\end{thm}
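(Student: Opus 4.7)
The plan is to read off a joining of $\mu$ and $\nu$ from the pair $(x,z)$ via empirical measures on the product, and then to show that this joining already witnesses the required inequality. More precisely, consider the empirical measures
\[
\xi_n := \frac{1}{|F_n|}\sum_{g\in F_n}\delta_{(gx,\,gz)} \in \M(X\times X).
\]
The marginals of $\xi_n$ on the two copies of $X$ are exactly the empirical measures that converge to $\mu$ and $\nu$ respectively, by the assumptions $x\in\Gen_\F(\mu)$ and $z\in\Gen_\F(\nu)$. Since $d$ is a continuous (bounded) function on $X\times X$, we have the identity
\[
\frac{1}{|F_n|}\sum_{g\in F_n} d(gx,gz)=\int_{X\times X} d\,\text{d}\xi_n.
\]

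First, I would pick a subsequence $(F_{n_k})$ along which $\frac{1}{|F_{n_k}|}\sum_{g\in F_{n_k}} d(gx,gz)\to D_{B,\F}(x,z)$, and then, by weak$^*$ compactness of $\M(X\times X)$, extract a further subsequence (still denoted $(n_k)$) along which $\xi_{n_k}\to\lambda$ weakly$^*$ for some $\lambda\in\M(X\times X)$. Passing to the limit in the two marginals yields marginals $\mu$ and $\nu$. Passing to the limit in the integral of the continuous function $d$ gives $\int d\,\text{d}\lambda=D_{B,\F}(x,z)$.

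The key step is verifying that $\lambda$ is $G$-invariant, so that $\lambda\in J(\mu,\nu)$. For any $g\in G$ and any continuous $f\colon X\times X\to\mathbb R$, the standard \Folner estimate gives
\[
\left|\int f\circ(g\times g)\,\text{d}\xi_n-\int f\,\text{d}\xi_n\right|\leq 2\|f\|_\infty\cdot\frac{|g^{-1}F_n\,\Delta\, F_n|}{|F_n|}\xrightarrow[n\to\infty]{}0,
\]
so $\lambda$ is invariant under the diagonal action of $G$. Hence $\lambda\in J(\mu,\nu)$, and therefore
\[
D_{B,\F}(x,z)=\int_{X\times X} d\,\text{d}\lambda\;\geq\;\inf_{\lambda'\in J(\mu,\nu)}\int d\,\text{d}\lambda'=\bar\rho(\mu,\nu).
\]

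I do not expect a serious obstacle: the argument is a direct combination of the weak$^*$ compactness of $\M(X\times X)$, the definition of the Besicovitch pseudometric as a $\limsup$ of averages, and the \Folner property (used only to secure $G$-invariance of any weak$^*$ cluster point of $(\xi_n)$). The one subtlety worth being careful about is the two-step extraction of subsequences: I must first pass to a subsequence that realises the $\limsup$ defining $D_{B,\F}(x,z)$, and only afterwards to a sub-subsequence along which $\xi_{n_k}$ converges weakly$^*$; reversing the order would a priori only yield $D_{B,\F}(x,z)\geq \int d\,\text{d}\lambda$ for some $\lambda\in J(\mu,\nu)$, which is in fact already sufficient to conclude, but the ordering above keeps the equality that may be useful elsewhere.
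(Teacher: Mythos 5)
Your proposal is correct and follows essentially the same route as the paper: the paper also passes to the empirical measures $\mathbf m\big((x,z),F_n\big)=\frac{1}{|F_n|}\sum_{f\in F_n}\delta_{(fx,fz)}$ on $X\times X$, identifies $D_{B,\F}(x,z)$ with the $\limsup$ of $\int d\,\mathrm{d}\mathbf m\big((x,z),F_n\big)$, and concludes via Lemma~\ref{lem:inJ}, which asserts exactly that every weak$^*$ cluster point of these empirical measures lies in $J(\mu,\nu)$. The only difference is that you prove that lemma inline (marginals plus the \Folner estimate for $G$-invariance) rather than citing it; note in passing that the symmetric difference in your invariance estimate should be $|gF_n\,\Delta\,F_n|$ rather than $|g^{-1}F_n\,\Delta\,F_n|$, though both tend to zero for a left \Folner sequence, so nothing is affected.
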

\begin{proof}
For every $n\in\N$ let \[\mathbf{m}\big((x,z), F_n\big)=\frac{1}{|F_n|}\sum_{f\in F_n}\delta_{(fx, fz)}.\]
Note that
\begin{equation*}
 D_{B,\F}(x,z)=\limsup\limits_{N\to\infty}\frac{1}{|F_N|}\sum_{f\in F_N}d(fx,fz)=\limsup\limits_{N\to\infty}\int d\,\text{d}\mathbf m\big((x,z), F_n\big).
 \end{equation*}
Therefore the claim follows from Lemma~\ref{lem:inJ}.
\end{proof}
\begin{rem}
Note that in fact we can strengthen Theorem~\ref{nowy} by replacing the $D_{B,\F}(x,z)$ by \[\liminf\limits_{N\to\infty}\frac{1}{|F_N|}\sum_{f\in F_N}d(fx, fz)\]
(see also \cite{Glas03}).
 \end{rem}
\begin{thm}\label{follows}
  For all $\mu,\nu\in\M_G^e(X)$ and any tempered two-sided \Folner sequence $\F=(F_n)_{n\in\N}$ one has $\overline{D}_{B,\F}(\mu,\nu)=\bar\rho(\mu,\nu)$.
\end{thm}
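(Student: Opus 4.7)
I would prove the two inequalities separately. The bound $\overline{D}_{B,\F}(\mu,\nu)\ge\bar\rho(\mu,\nu)$ is essentially a restatement of Theorem~\ref{nowy}, while the reverse bound requires producing a single pair of simultaneously generic points whose Besicovitch distance equals $\bar\rho(\mu,\nu)$. The latter will be obtained by running the Lindenstrauss pointwise ergodic theorem on an optimal ergodic joining.

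\textbf{Lower bound.} Since $\F$ is tempered, by Lindenstrauss's theorem $\mu(\Gen_{\F}(\mu))=\nu(\Gen_{\F}(\nu))=1$. For any $x\in\Gen_{\F}(\mu)$ and any $z\in\Gen_{\F}(\nu)$, Theorem~\ref{nowy} gives $D_{B,\F}(x,z)\ge\bar\rho(\mu,\nu)$; taking the infimum over $z$ yields $\overline{D}_{B,\F}(x,\nu)\ge\bar\rho(\mu,\nu)$ for every $x\in\Gen_{\F}(\mu)$, hence the $\mu$-a.e.\ constant value $\overline{D}_{B,\F}(\mu,\nu)$ is at least $\bar\rho(\mu,\nu)$.

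\textbf{Upper bound.} I would use the remark following Definition~\ref{dddd} to fix an ergodic joining $\lambda\in J^e(\mu,\nu)$ with $\int_{X\times X} d\,\mathrm d\lambda=\bar\rho(\mu,\nu)$. Viewing $\lambda$ as an ergodic $G$-invariant measure on $(X\times X,G)$ under the diagonal action, and noting that $\F$ remains tempered for this product system, the Lindenstrauss pointwise ergodic theorem provides a $\lambda$-full-measure set $E\subseteq X\times X$ on which the empirical measures
\[
\mathbf m\bigl((x,z),F_N\bigr)=\tfrac{1}{|F_N|}\sum_{f\in F_N}\delta_{(fx,fz)}
\]
converge weak$^*$ to $\lambda$ (separability of $C(X\times X)$ lets us pass from countably many observables to weak$^*$ convergence on a single full-measure set). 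Pushing forward by the two coordinate projections shows that $(x,z)\in E$ implies $x\in\Gen_{\F}(\mu)$ and $z\in\Gen_{\F}(\nu)$. Because $d$ is bounded and continuous on the compact space $X\times X$, weak$^*$ convergence gives
\[
D_{B,\F}(x,z)=\limsup_{N\to\infty}\int d\,\mathrm d\mathbf m\bigl((x,z),F_N\bigr)=\int d\,\mathrm d\lambda=\bar\rho(\mu,\nu).
\]

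\textbf{Reconciling with the definition.} To convert this pointwise equality into a bound on $\overline{D}_{B,\F}(\mu,\nu)$, note that the projection $\pi_1(E)$ has full $\mu$-measure (since $\mu$ is the first marginal of $\lambda$), so it meets the full $\mu$-measure set on which $x\mapsto\overline{D}_{B,\F}(x,\nu)$ equals the constant $\overline{D}_{B,\F}(\mu,\nu)$ from Lemma~\ref{lem:constant}. Choosing an $x_0$ in this intersection together with a companion $z_0$ with $(x_0,z_0)\in E$ yields
\[
\overline{D}_{B,\F}(\mu,\nu)=\overline{D}_{B,\F}(x_0,\nu)\le D_{B,\F}(x_0,z_0)=\bar\rho(\mu,\nu),
\]
which completes the proof. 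The only delicate point is this last bookkeeping step — matching the pointwise selection of $(x_0,z_0)$ with the $\mu$-almost-everywhere definition of $\overline{D}_{B,\F}(\mu,\nu)$; once the ergodic joining is chosen and Lindenstrauss's theorem is applied on the product, the rest is essentially a continuity-of-integration argument.
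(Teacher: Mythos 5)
Your proof is correct and follows essentially the same route as the paper's: the lower bound via Theorem~\ref{nowy}, and the upper bound by applying the pointwise ergodic theorem to an optimal ergodic joining to obtain a $\lambda$-full-measure set of simultaneously generic pairs $(x,z)$ with $D_{B,\F}(x,z)=\bar\rho(\mu,\nu)$. Your final bookkeeping step (intersecting the projection of that set with the set where $x\mapsto\overline{D}_{B,\F}(x,\nu)$ is constant) only makes explicit what the paper leaves implicit.
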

\begin{proof}
 It follows from Theorem~\ref{nowy} that $\overline{D}_{B,\F}(\mu,\nu)\geq \bar\rho(\mu,\nu)$.
 We will show the opposite inequality. Let $\lambda\in  J(\mu,\nu)$ be an ergodic joining such that \[\bar\rho(\mu,\nu)=\int_{X\times X}d\,\text{d}\lambda.\] Using the  Birkhoff ergodic theorem we obtain that for $\lambda$-almost all pairs $(x,z)\in X\times X$ the following two conditions are satisfied:
 \begin{enumerate}
\item $\displaystyle\frac{1}{|F_n|}\sum_{f\in F_n}d(fx,fy)\to \bar\rho(\mu,\nu)\text{ as }n\to\infty,$
 \item $(x,y)\in\Gen_{\F}(\lambda)\subseteq\Gen_{\F}(\mu)\times\Gen_{\F}(\nu)$.
\end{enumerate}
Thus $\overline{D}_{B,\F}(\mu,\nu)\leq \bar\rho(\mu,\nu)$.
 \end{proof}

The following corollary extends~\cite[Corollary 7.3.]{Spec} where the result was established for $\Z$-actions. 
 \begin{cor}
If $\mu,\nu\in\M_G^e(X)$ satisfy $\bar\rho(\mu,\nu)<
\eps$ for some $\eps>0$ and $\F$ is a tempered two-sided \Folner sequence, then there exist $x,z\in X$ such that $x$ is generic for $\mu$ along $\F$, $z$ is generic for $\nu$ along $\F$ and $D_{B,\F}(x,z)<\eps$.
\end{cor}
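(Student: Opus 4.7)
The plan is to unpack the definitions and apply Theorem~\ref{follows} directly. Since $\F$ is a tempered two-sided \Folner sequence, Theorem~\ref{follows} gives $\overline{D}_{B,\F}(\mu,\nu)=\bar\rho(\mu,\nu)<\eps$. By Definition~\ref{metr} and Lemma~\ref{lem:constant}, the quantity $\overline{D}_{B,\F}(\mu,\nu)$ is precisely the $\mu$-a.e.\ constant value of the function $x\mapsto\overline{D}_{B,\F}(x,\nu)$ on $\Gen_{\F}(\mu)$. Hence for $\mu$-almost every $x\in\Gen_{\F}(\mu)$,
\[
\overline{D}_{B,\F}(x,\nu)=\inf\bigl\{D_{B,\F}(x,z)\mid z\in\Gen_{\F}(\nu)\bigr\}<\eps.
\]

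Next I need to verify that such an $x$ actually exists, i.e.\ that the set of $\mu$-generic points on which the above holds is non-empty. Since $\F$ is tempered and $\mu\in\M_G^e(X)$, the Lindenstrauss pointwise ergodic theorem (quoted in the Preliminaries) yields $\mu(\Gen_{\F}(\mu))=1$; in particular $\Gen_{\F}(\mu)\ne\emptyset$, so we can pick $x\in\Gen_{\F}(\mu)$ with $\overline{D}_{B,\F}(x,\nu)<\eps$.

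Finally I would use the definition of the infimum: because $\overline{D}_{B,\F}(x,\nu)<\eps$, there exists $z\in\Gen_{\F}(\nu)$ with $D_{B,\F}(x,z)<\eps$. This gives the pair $(x,z)$ required by the statement. I do not foresee any serious obstacle here; the whole argument is a one-line consequence of Theorem~\ref{follows} once one has observed that the infimum defining $\overline{D}_{B,\F}(x,\nu)$ is strict whenever it is smaller than $\eps$, and that $\Gen_{\F}(\mu)$ is non-empty under the tempered assumption on $\F$. The only point worth noting is that temperedness of $\F$ is used twice: once (implicitly, through Theorem~\ref{follows}) to guarantee that $\overline{D}_{B,\F}$ is well defined and equals $\bar\rho$, and once to ensure the existence of generic points for $\mu$ (and likewise for $\nu$, which is needed for $\Gen_{\F}(\nu)$ to be non-empty so the infimum is over a non-empty set).
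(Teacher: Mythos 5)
Your proposal is correct and follows essentially the same route as the paper: the paper's proof simply invokes Theorem~\ref{follows} to identify $\bar\rho(\mu,\nu)$ with $\overline{D}_{B,\F}(\mu,\nu)$ and then concludes the existence of the pair $(x,z)$. You have merely filled in the (correct) details that the paper leaves implicit — the non-emptiness of $\Gen_{\F}(\mu)$ and $\Gen_{\F}(\nu)$ via temperedness, and the unpacking of the two infima.
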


\begin{proof}
By Theorem~\ref{follows},
$\bar\rho(\mu,\nu)=\overline{D}_{B,\F}(\mu,\nu)$.
Hence, there exist points $x,z\in X$ such that $x$ is generic for $\mu$ along $\F$ and $z$ is generic for $\nu$ along $\F$ and
$D_{B,\F}(x,z)<\eps$, as required.
\end{proof}

In fact repeating the proof from \cite{Spec} allows to skip the assumption that the \Folner sequence is two-sided.


\section{Main results}

The following lemma is folklore (see also \cite[Lemma 3.1.]{bogachev07}).
\begin{lem}\label{lem:Generating_partition} 
Let $(\nu_k)_{k=1}^{\infty}\subseteq \M(X)$. Then there exists a countable basis $\beta$ for the topology of $(X,d)$ such that for every $B\in\beta$ one has
$\nu_k(\partial B)=0 \text { for every } k\in \N$.
\end{lem}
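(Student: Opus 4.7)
The plan is to exploit the separability of the compact metric space $X$ together with the standard fact that, for a finite measure, only countably many concentric spheres around a given point can carry positive mass. The basis $\beta$ will then be built out of open balls of carefully chosen radii around a countable dense set of centres.

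First, I fix a countable dense subset $\{x_i\}_{i\in\N}$ of $X$. For each $i\in\N$ and each $k\in\N$, the spheres $S_i(r):=\{y\in X\mid d(x_i,y)=r\}$, $r>0$, are pairwise disjoint, so the set
\[
R_{i,k}:=\bigl\{r>0\mid \nu_k(S_i(r))>0\bigr\}
\]
is countable (since $\sum_{r\in R_{i,k}}\nu_k(S_i(r))\le\nu_k(X)\le 1$). Hence the ``bad'' set $R_i:=\bigcup_{k\in\N}R_{i,k}$ is countable, and its complement in $(0,\infty)$ is therefore dense. I then pick a sequence $(r_{i,n})_{n\in\N}\subseteq (0,\infty)\setminus R_i$ that is dense in $(0,\infty)$.

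Define
\[
\beta:=\bigl\{B(x_i,r_{i,n})\mid i,n\in\N\bigr\}.
\]
This is countable. For each element $B(x_i,r_{i,n})\in\beta$, one has $\partial B(x_i,r_{i,n})\subseteq S_i(r_{i,n})$, and since $r_{i,n}\notin R_{i,k}$ for every $k$, this gives $\nu_k(\partial B(x_i,r_{i,n}))=0$ for all $k\in\N$, which is the required boundary property.

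It then remains to check that $\beta$ is a basis for the topology of $(X,d)$. Given an open set $U\subseteq X$ and $x\in U$, choose $\varepsilon>0$ with $B(x,\varepsilon)\subseteq U$. By density of $\{x_i\}$, pick $i$ with $d(x,x_i)<\varepsilon/3$; by density of the sequence $(r_{i,n})_{n\in\N}$ in $(0,\infty)$, pick $n$ with $d(x,x_i)<r_{i,n}<2\varepsilon/3$. Then $x\in B(x_i,r_{i,n})$, and for any $y\in B(x_i,r_{i,n})$ the triangle inequality gives $d(x,y)\le d(x,x_i)+d(x_i,y)<\varepsilon/3+2\varepsilon/3=\varepsilon$, so $B(x_i,r_{i,n})\subseteq B(x,\varepsilon)\subseteq U$. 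Thus every open set is a union of elements of $\beta$, completing the argument. The only minor point requiring care is ensuring both the density of the centres (for the basis property) and the density of the radii outside the countable bad set (so that the boundary property and the basis property can coexist); once the countability of $R_i$ is observed, both requirements are satisfied simultaneously.
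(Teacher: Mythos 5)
Your proof is correct. The paper itself gives no argument here --- it labels the lemma as folklore and points to Bogachev --- and your construction (countable dense set of centres, discarding the countably many radii whose spheres are charged by some $\nu_k$, then taking balls with a dense set of surviving radii) is exactly the standard folklore proof, including the correct observation that $\partial B(x_i,r)\subseteq\{y : d(x_i,y)=r\}$ and the $\varepsilon/3$ argument for the basis property. The only cosmetic point is the justification of countability of $R_{i,k}$: writing an uncountable sum is informal, and it is cleaner to note that $\{r : \nu_k(S_i(r))>1/m\}$ has at most $m$ elements for each $m$, since the spheres are pairwise disjoint and $\nu_k(X)\le 1$.
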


Theorem~\ref{joining_thm} extends \cite[Lemma 3.17]{BKLR19} and \cite[Theorem 3.3.]{BL}.

\begin{thm}\label{joining_thm}
Let $(X,G)$ be a topological dynamical system. Suppose that there exist sequences $(x_m)_{m=1}^{\infty}\subseteq X$, $(\mu_m)_{m=1}^{\infty}\subseteq \M_G(X)$, a point $x\in X$, and a \Folner sequence $\F=(F_k)_{k=1}^{\infty}$ in $G$ such that 
 \begin{enumerate}[(i)]
\item for every $m\in\N$ the point $x_m$ is generic for $\mu_m$ along $\F$,
\item $D_{B,\F}(x_m,x)\to 0$ as $m\to \infty$.
\end{enumerate}
Then the point $x\in X$ is generic for some measure $\mu\in\M_G(X)$ along $\F$, and there exists a joining $\nu\in J(\mu_1,\mu_2,....)$ such that $(X,G,\mu)$ is a factor of $(X^{\infty},G,\nu)$. Furthermore, if each $\mu_m$ is ergodic, then $\nu$ may be chosen ergodic.
\end{thm}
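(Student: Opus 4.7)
The plan is to proceed in four steps: produce $\mu$ via a Hausdorff-distance argument on the sets $\hat\omega_\F$; construct a candidate joining on $X^\infty$ as a subsequential weak$^*$ limit of empirical measures along $\F$; extract a factor map from the Besicovitch convergence through an $L^1$-argument; and refine to an ergodic joining via ergodic decomposition.

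First I would establish that $x$ is generic for some $\mu\in\M_G(X)$. Since $x_m\in\Gen_\F(\mu_m)$, one has $\hat\omega_\F(x_m)=\{\mu_m\}$. The hypothesis $D_{B,\F}(x_m,x)\to 0$ together with Theorem~\ref{HD} forces $\HD(\{\mu_m\},\hat\omega_\F(x))\to 0$; since the Prokhorov-diameter of $\hat\omega_\F(x)$ is bounded by $2\HD(\{\mu_m\},\hat\omega_\F(x))$, the set $\hat\omega_\F(x)$ must collapse to a singleton $\{\mu\}$, so $x\in\Gen_\F(\mu)$ and $\mu_m\to\mu$ weakly$^*$.

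Next, on the compact metric product $X^\infty\times X$, I would extract from the empirical measures along $\F$ a subsequential weak$^*$ limit
\[
\tilde\nu=\lim_{k\to\infty}\,\frac{1}{|F_{n_k}|}\sum_{f\in F_{n_k}}\delta_{(fx_1,fx_2,\ldots,fx)}.
\]
The Følner property makes $\tilde\nu$ invariant under the diagonal $G$-action; the $m$-th coordinate marginal equals $\mu_m$ and the last marginal equals $\mu$. Setting $\nu:=\pi_{X^\infty}(\tilde\nu)$ gives $\nu\in J(\mu_1,\mu_2,\ldots)$ and $\tilde\nu\in J(\nu,\mu)$. The crucial step is upgrading $\tilde\nu$ into the graph of a factor map: applying weak$^*$ convergence to the continuous bounded function $(\bar y,w)\mapsto d(y_m,w)$ yields
\[
\int d(y_m,w)\,d\tilde\nu(\bar y,w)=\lim_{k\to\infty}\frac{1}{|F_{n_k}|}\sum_{f\in F_{n_k}}d(fx_m,fx)\le D_{B,\F}(x_m,x),
\]
which tends to $0$ as $m\to\infty$. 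Hence $d(y_m,w)\to 0$ in $L^1(\tilde\nu)$, so along some subsequence $y_{m_j}\to w$ for $\tilde\nu$-a.e. $(\bar y,w)$. This exhibits $w$ as a measurable function $\phi(\bar y)$ of the $X^\infty$-coordinates; $G$-invariance of $\tilde\nu$ makes $\phi$ $G$-equivariant modulo $\nu$-null sets, and $\phi_*\nu=\mu$. Applying Theorem~\ref{thm:Factor} to the joining $\tilde\nu$ identifies $(X,G,\mu)$ as a factor of $(X^\infty,G,\nu)$.

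For the ergodic refinement, Theorem~\ref{thm:Besicovitch-quasi-genericity-and-ergodicity} guarantees $\mu\in\M_G^e(X)$ whenever every $\mu_m$ is ergodic. I would then decompose $\tilde\nu=\int\tilde\nu_\theta\,dm(\theta)$ into its ergodic components on $X^\infty\times X$. Since $\mu_m$ is ergodic, the uniqueness of ergodic decomposition—applied to $\mu_m$ written as a mixture of the $m$-th coordinate marginals of $\tilde\nu_\theta$—forces these marginals to equal $\mu_m$ for $m$-a.e. $\theta$; the same argument for $\mu$ handles the last coordinate. Intersecting over the countably many coordinates together with the full-measure set where $\tilde\nu_\theta(\{w=\phi(\bar y)\})=1$, a suitable $\theta$ produces an ergodic joining $\nu_\theta:=\pi_{X^\infty}(\tilde\nu_\theta)\in J^e(\mu_1,\mu_2,\ldots)$ for which $\phi$ still realises $(X,G,\mu)$ as a factor. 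The main obstacle I foresee lies in the third step—translating the Besicovitch-density hypothesis about the orbits $fx_m,fx$ into genuine measurable equality of coordinates under $\tilde\nu$—and the bridge is the identification of orbit averages of $d$ with integrals against the weak$^*$ limit, which converts the hypothesis into $L^1$-smallness and, via a diagonal subsequence extraction, into the required $\tilde\nu$-a.s. equality.
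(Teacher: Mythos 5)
Your proposal is correct, and its overall architecture (genericity of $x$ via Theorem~\ref{HD}, a subsequential weak$^*$ limit of empirical measures on the product giving a joining $\tilde\nu\in J(\mu,\mu_1,\mu_2,\ldots)$, and an ergodic-decomposition argument for the ``furthermore'' part) matches the paper's. Where you genuinely diverge is in the central step of showing that the $X$-coordinate is measurable over the $X^\infty$-coordinates mod $\tilde\nu$. The paper works at the level of the measure algebra: it fixes a countable basis $\beta$ of sets with $\mu$- and $\mu_m$-null boundaries (Lemma~\ref{lem:Generating_partition}), estimates $\bar\nu\big((B\times X^\infty)\,\Delta\,(X\times A_n)\big)$ using $D'_{B,\F}$ together with the $\delta$-neighbourhood of $\partial B$ and the portmanteau theorem, and then passes to the limit in the complete metric space $(\mathcal B(X\times X^\infty),\hat\rho)$ to verify the hypothesis of Theorem~\ref{thm:Factor} set by set. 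You instead integrate the continuous function $(\bar y,w)\mapsto d(y_m,w)$ against $\tilde\nu$, identify the integral with the subsequential orbit average bounded by $D_{B,\F}(x_m,x)$, conclude $L^1(\tilde\nu)$-convergence $d(y_m,w)\to 0$, and extract an a.e.-convergent subsequence to exhibit $w=\phi(\bar y)$ explicitly as a measurable, $G$-equivariant graph. Your route is shorter and avoids Lemma~\ref{lem:Generating_partition} and the boundary/portmanteau bookkeeping entirely, at the price of invoking the standard $L^1$-to-a.e.-subsequence extraction; it also produces the factor map concretely rather than only verifying the $\sigma$-algebra inclusion of Theorem~\ref{thm:Factor}. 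Both arguments need $\mu$ ergodic (via Theorem~\ref{thm:Besicovitch-quasi-genericity-and-ergodicity}) in the final step so that uniqueness of the ergodic decomposition pins the last-coordinate marginal of a.e.\ ergodic component to $\mu$; you handle this correctly.
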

\begin{proof}
It follows from Theorem~\ref{HD} that $x$ is generic for some $\mu \in \mathcal{M}_G(X)$ along $\mathcal{F}$. To construct a joining with the required properties, consider  
\[
\hat{x} := (x, x_1, x_2, \ldots) \in X \times X^{\infty},
\]
and define $(\xi_k)_{k\geq 1} \subseteq \mathcal{M}(X \times X^\infty)$ by setting
\[
\xi_k := \frac{1}{|F_k|} \sum_{f \in F_k} \delta_{f(\hat{x})}, 
\quad k \geq 1.
\]
Let $\bar{\nu} \in \mathcal{M}_{G}(X \times X^{\infty})$ be a weak$^*$ limit point 
of $(\xi_k)_{k\geq 1}$ (along a subsequence), which exists by the compactness of $\mathcal{M}(X \times X^\infty)$. Since $x$ is generic for $\mu$ along $\mathcal{F}$ and for every $m \geq 1$,
$x_m$ is generic for $\mu_m$ along $\mathcal{F}$, we have 
$\bar{\nu} \in J(\mu, \mu_1, \mu_2, \ldots)$. 

By Theorem~\ref{thm:Factor}, to show that $(X,G,\mu)$ is a factor of $(X^\infty,G,\nu)$, where $\nu$ is the marginal of $\bar{\nu}$ on $X^{\infty}$, it suffices to show that
\begin{equation} \label{eq:sigma_algebra}
\mathcal{B}(X) \otimes \{X^\infty,\emptyset\} 
\subset \{X,\emptyset\} \otimes \mathcal{B}(X^\infty) 
\pmod{\bar{\nu}}.
\end{equation}
Using Lemma~\ref{lem:Generating_partition}, choose a basis $\beta$ generating $\mathcal{B}(X)$ such that, for every $B \in \beta$ and every $m \in \mathbb{N}$, we have $\mu(\partial B) = \mu_m(\partial B) = 0$. To establish \eqref{eq:sigma_algebra} it is enough to show that for every $B \in \beta$ there exists $A \in \mathcal{B}(X^\infty)$ such that
\[
\bar{\nu}\big((B \times X^\infty) \,\Delta\, (X \times A)\big) = 0.
\]
Fix $B \in \beta$. For $n \geq 1$, define
\[
A_n := 
(\underbrace{X \times \cdots \times X}_{n-1\ \text{times}} 
\times B \times X \times \cdots) \in \mathcal{B}(X^{\infty}).
\]
Since $\partial(\partial B) \subseteq \partial B$, we have $\mu(\partial(\partial B)) = 0$. By the portmanteau theorem,
\[
\frac{1}{|F_k|} \sum_{f \in F_k} \delta_{fx}(\partial B) 
\;\longrightarrow\; \mu(\partial B) = 0
\quad\text{as } k \to \infty.
\]
Fix $\varepsilon > 0$. There exists $\delta < \varepsilon/2$ such that
\[
\limsup_{k \to \infty} \frac{1}{|F_k|} 
\big\lvert \{ f \in F_k : fx \in (\partial B)^{\delta} \} \big\rvert 
< \frac{\varepsilon}{2}.
\]
By assumption, there exists $N \in \mathbb{N}$ such that 
$D'_{B,\mathcal{F}}(x, x_N) < \delta$, that is.,
\[
\bar{d}_{\mathcal{F}}\big( \{ g \in G : d(gx, gx_N) \geq \delta \} \big) 
< \delta.
\]
Let $\pi_N : X \times X^\infty \to X \times X$ denote the projection
\[
\pi_N(y, y_1, y_2, \ldots, y_N, \ldots) := (y, y_N),
\]
and let $\nu_N$ be the push-forward of $\bar{\nu}$ under $\pi_N$. Then
\begin{multline*}
\bar{\nu}\big((B\times X^\infty) \,\Delta\, (X\times A_{N})\big) 
= \nu_N\big((B\times X) \,\Delta\, (X\times B)\big) \\
= \lim_{k\to\infty} \frac{1}{|F_k|} 
\big\lvert \{ f \in F_k : (fx, fx_N) \in (B\times B^c) \cup (B^c\times B) \} 
\big\rvert \\
\leq \limsup_{k\to\infty} \frac{1}{|F_k|} 
\big\lvert \{ f \in F_k : d(fx, fx_N) \geq \delta \} \big\rvert \\
\quad + \limsup_{k\to\infty} \frac{1}{|F_k|} 
\big\lvert \{ f \in F_k : fx \in (\partial B)^{\delta} \} \big\rvert 
< \delta + \frac{\varepsilon}{2} < \varepsilon.
\end{multline*}
Thus, for every $\eps>0$ there exists $N\in\N$ so that for all $n\geq N$ we obtain 
\begin{equation}\label{eq:joining_ii}
\bar{\nu}((B\times X^{\infty})\Delta (X\times A_n))<\eps.
\end{equation}
Define  
\[
\hat\rho(C,D)=\bar{\nu}(C\Delta D),\quad  C,D\in \mathcal{B}(X\times X^\infty). 
\]
It is well known that, after identifying sets that differ by $\bar{\nu}$-null set, the space $(\mathcal{B}(X\times X^\infty),\hat{\rho})$ is a complete metric space (refer to \cite[Theorem 1.12.6]{bogachev07} for details).

From \eqref{eq:joining_ii}, it follows that $(X\times A_n)_{n=1}^{\infty}$ is Cauchy in $(\mathcal{B}(X\times X^\infty),\hat{\rho})$ and thus converges to some $C\in \mathcal{B}(X\times X_\infty)$. Since the family 
\[
\mathcal{A}=\{X\times A:A\in \mathcal{B}(X^\infty)\}
\]
is closed in metric $\hat{\rho}$, the limit set $C$ must also belong to $\mathcal{A}$. Hence there exists $A\in \mathcal{B}(X^\infty)$ such that $C=X\times A$, and hence $\bar{\nu}((B\times X^\infty) \Delta (X\times A))=0$. This establishes that $(X,G,\mu)$ is a factor of $(X^\infty, G, \nu)$.
 

For the ``furthermore'' part of the theorem, let 
$\mathcal{C} \subset \mathcal{B}(X^\infty)$ be a $G$-invariant 
sub-$\sigma$-algebra representing the factor system $(X,G,\mu)$,
and let $\Lambda$ denote the ergodic decomposition of $\nu$. Restricting $\Lambda$ to $\mathcal{C}$ yields
\begin{equation} \label{eq:restriction}
    \mu \;=\; \nu_{\lvert \mathcal{C}} 
    \;=\; \int \lambda_{\lvert \mathcal{C}} \, d\Lambda(\lambda).
\end{equation}
By Theorem~\ref{thm:Besicovitch-quasi-genericity-and-ergodicity},
the measure $\mu$ is ergodic and the uniqueness of the ergodic decomposition implies
\begin{equation} \label{eq:lambda-equals-mu}
    \lambda_{\lvert \mathcal{C}} = \mu 
    \quad \text{for } \Lambda\text{-a.e. } \lambda.
\end{equation}
Thus, $(X,G,\mu)$ arises as a factor of 
$\Lambda$-almost every ergodic component of $(X^\infty,G,\nu)$. Since each $\mu_m$ with $m \geq 1$ is ergodic, every such component lies 
in $J^e(\mu_1,\mu_2,\dots)$. It follows that, for the purposes of the ``furthermore'' statement, the measure $\nu$ may be replaced by any of these components.
\end{proof}

 \begin{cor}
Let $\F$ be a \Folner sequence, and let $\mathscr{C}$ be a characteristic class. Define
\[
\Gen_{\F}(\mathscr{C})=\big\{x\in \Gen_{\F}(\mu) : \mu\in\MC\big\}
\]
to be the collection of generic points along $\F$ for measures in $\mathscr{C}$. Then $\Gen_{\F}(\mathscr{C})$ is closed with respect to $D_{B,\F}$.
\end{cor}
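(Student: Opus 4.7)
The plan is to verify sequential closure of $\Gen_{\F}(\mathscr{C})$ in the Besicovitch pseudometric. I begin by fixing a sequence $(x_m)_{m=1}^{\infty}\subseteq\Gen_{\F}(\mathscr{C})$ with $D_{B,\F}(x_m,x)\to 0$ for some $x\in X$, and my goal is to exhibit a measure $\mu\in\MC$ for which $x\in\Gen_{\F}(\mu)$. By the very definition of $\Gen_{\F}(\mathscr{C})$, for each $m\in\N$ I can fix (arbitrarily) a measure $\mu_m\in\MC$ such that $x_m\in\Gen_{\F}(\mu_m)$.

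The next step is to invoke Theorem~\ref{joining_thm} with this data. Its hypotheses are satisfied: each $x_m$ is generic for $\mu_m\in\M_G(X)$ along $\F$, and $D_{B,\F}(x_m,x)\to 0$ by assumption. The theorem then delivers, all at once, a measure $\mu\in\M_G(X)$ such that $x\in\Gen_{\F}(\mu)$, together with a joining $\nu\in J(\mu_1,\mu_2,\ldots)$ for which $(X,G,\mu)$ arises as a factor of $(X^{\infty},G,\nu)$.

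To finish, I would apply the two defining closure properties of a characteristic class in sequence. Since every $(X,G,\mu_m)$ lies in $\mathscr{C}$ and $\mathscr{C}$ is closed under countable joinings, the system $(X^{\infty},G,\nu)$ belongs to $\mathscr{C}$; and since $\mathscr{C}$ is closed under factors, the factor $(X,G,\mu)$ belongs to $\mathscr{C}$ as well. Hence $\mu\in\MC$, and consequently $x\in\Gen_{\F}(\mu)\subseteq\Gen_{\F}(\mathscr{C})$, as required.

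I do not anticipate a genuine obstacle: Theorem~\ref{joining_thm} does the technical work, and the corollary reduces to a direct application of the definition of $\mathscr{C}$. The only point worth a moment's care is to notice that the non-ergodic version of Theorem~\ref{joining_thm} already suffices — the \emph{furthermore} clause about ergodicity of $\nu$ is unnecessary here, since the definition of a characteristic class imposes no ergodicity condition on the joinings it absorbs, so we may freely work with the (possibly non-ergodic) $\mu_m$ and $\nu$ produced by the main part of the theorem.
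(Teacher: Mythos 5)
Your argument is correct and is exactly the intended derivation: the paper states this corollary without proof as an immediate consequence of Theorem~\ref{joining_thm}, and your application of that theorem followed by closure of $\mathscr{C}$ under countable joinings and then factors is precisely what is meant. Your closing observation that the ergodic ``furthermore'' clause is not needed here is also accurate.
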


\begin{cor}
    The set  $\MCe$ is closed with respect to $\bar\rho$.
\end{cor}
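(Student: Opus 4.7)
The plan is to produce an ergodic joining $\nu\in J^e(\mu_1,\mu_2,\ldots)$ of which $(X,G,\mu)$ is a factor; the characteristic-class axioms---closure under countable joinings and under factors---then force $\mu\in\MC$, and the ergodicity of $\mu$ will follow from that of $\nu$. The construction is a measure-theoretic analogue of the pointwise argument in the proof of Theorem~\ref{joining_thm}.

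For each $m$, fix an optimal $\lambda_m\in J(\mu_m,\mu)$ realising $\bar\rho(\mu_m,\mu)=\int d\,d\lambda_m$, and form the relatively independent joining
\[
\Lambda \;=\; \int_X \Big(\bigotimes_{m\ge 1}\lambda_m^x\Big)\otimes \delta_x\, d\mu(x) \;\in\; \M_G(X^\infty\times X),
\]
where $\lambda_m^x$ denotes the disintegration of $\lambda_m$ over its second marginal $\mu$. The marginal of $\Lambda$ on each copy $X_m$ of $X$ in $X^\infty$ is $\mu_m$, its marginal on the last coordinate is $\mu$, and since $\int d(y_m,x)\,d\Lambda=\bar\rho(\mu_m,\mu)\to 0$, the coordinate $y_m$ converges to $x$ in $L^1(\Lambda)$; in particular the marginal of $\Lambda$ on $X^\infty$ is some $\nu\in J(\mu_1,\mu_2,\ldots)$.

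To apply Theorem~\ref{thm:Factor} and deduce that $(X,G,\mu)$ is a factor of $(X^\infty,G,\nu)$, I need $\{X^\infty,\emptyset\}\otimes\mathcal{B}(X)\subseteq\mathcal{B}(X^\infty)\otimes\{X,\emptyset\}\pmod{\Lambda}$. Using Lemma~\ref{lem:Generating_partition}, choose a countable basis $\beta$ for $X$ with $\mu(\partial B)=\mu_m(\partial B)=0$ for every $B\in\beta$ and every $m\ge 1$; for a fixed $B\in\beta$, the cylinders $A_n=X^{n-1}\times B\times X^\infty$ satisfy $\Lambda\big((X^\infty\times B)\,\Delta\,(A_n\times X)\big)\to 0$ by a portmanteau-style boundary-thickening estimate of exactly the shape used in Theorem~\ref{joining_thm} (one combines the Markov bound $\Lambda(\{d(y_n,x)\ge\delta\})\le\bar\rho(\mu_n,\mu)/\delta$ with $\mu(\partial B)=0$). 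Completeness of the symmetric-difference pseudometric on $\mathcal{B}(X^\infty\times X)$ modulo $\Lambda$ then yields an $A\in\mathcal{B}(X^\infty)$ with $\Lambda((X^\infty\times B)\,\Delta\,(A\times X))=0$, and hence $\mu\in\MC$.

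Ergodicity follows from an ergodic decomposition $\Lambda=\int\Lambda_\omega\,dP(\omega)$. Since each $\mu_m$ is ergodic, the $m$-th marginal of $\Lambda_\omega$ equals $\mu_m$ for $P$-a.e.\ $\omega$; applying Fatou to $\int f_m\,dP=\bar\rho(\mu_m,\mu)\to 0$ with $f_m(\omega)=\int d(y_m,x)\,d\Lambda_\omega$ yields, along a subsequence, $\bar\rho(\mu_m,\mu^\omega)\to 0$ for $P$-a.e.\ $\omega$, where $\mu^\omega$ is the $X$-marginal of $\Lambda_\omega$. Uniqueness of $\bar\rho$-limits forces $\mu^\omega=\mu$, so $\mu$ is ergodic (being a factor of the ergodic $\Lambda_\omega$), and the $X^\infty$-marginal of any such $\Lambda_\omega$ is the required $\nu\in J^e(\mu_1,\mu_2,\ldots)$ with $\mu$ as a factor. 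I expect the main technical burden to be the portmanteau argument in the third paragraph, where the basis $\beta$ must be chosen compatibly with all of $\mu$ and $(\mu_m)_{m\ge 1}$ at once.
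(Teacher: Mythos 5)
Your proposal is correct, but it takes a genuinely different route from the paper. The paper obtains this corollary as a consequence of its pointwise result: it passes to a tempered two-sided \Folner sequence, uses the identification $\bar\rho=\overline{D}_{B,\F}$ on $\M_G^e(X)$ (Theorem~\ref{follows}) to replace $\bar\rho$-convergence of measures by $D_{B,\F}$-convergence of generic points, and then invokes Theorem~\ref{joining_thm} (whose ergodicity step rests on Theorem~\ref{thm:Besicovitch-quasi-genericity-and-ergodicity}). You instead work entirely at the level of measures: you build the relatively independent joining over $\mu$ of optimal joinings $\lambda_m\in J(\mu_m,\mu)$, run the same measure-algebra/portmanteau argument to verify the hypothesis of Theorem~\ref{thm:Factor}, and recover ergodicity of $\mu$ from the ergodic decomposition of $\Lambda$ together with the fact that $\bar\rho$ is a metric (Fatou plus uniqueness of limits), rather than from the pointwise ergodic machinery. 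Your argument is self-contained and bypasses Sections on genericity and the Besicovitch pseudometric altogether (it does not even use the \Folner sequence), which is a real simplification for this particular corollary; what it does not give is the stronger pointwise statement that $\Gen_{\F}(\mathscr{C})$ is $D_{B,\F}$-closed, which is the paper's main theorem and the reason the paper routes the measure-level statement through generic points. One small caveat, shared with the paper's own proof of Theorem~\ref{joining_thm}: the inclusion of $\{\mathbf{1}_B(x)\neq\mathbf{1}_B(y_n),\ d(x,y_n)<\delta\}$ into $\{x\in(\partial B)^{\delta}\}$ should really be phrased with the set $\{x:\operatorname{dist}(x,B)<\delta\}\cap\{x:\operatorname{dist}(x,B^c)<\delta\}$, which decreases to $\partial B$ as $\delta\downarrow 0$; this is a cosmetic fix and does not affect the argument.
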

\section{Applications}

In \cite{LM} the authors employed convergence in the Besicovitch pseudometric to establish ergodicity of certain dynamical systems by approximating generic points with periodic ones. These examples fit naturally within the scope of our theorem and can be revisited here as illustrative applications. However, instead of deducing ergodicity, we will conclude that the associated systems have zero entropy and discrete spectrum. For the reader’s convenience, we briefly recall these examples below.

\begin{ex}
A countable group $G$ is called \emph{residually finite} if there exists a descending sequence $(H_n)_{n\ge 0}$ of finite-index normal subgroups such that
\[
\bigcap_{n=0}^\infty H_n = \{e\}.
\]
If $G$ is amenable and residually finite, a result of Cortez--Petite~\cite[Lemma~4]{Cortez} yields such a sequence $(H_n)_{n\geq 0}$ together with a \Folner sequence $(F_n)_{n\ge 0}$ satisfying:
\begin{enumerate}
    \item[(i)]  $F_n$ is a fundamental domain for $G/H_n$ for every $n\in\N$,;
    \item[(ii)] for every $n \ge 0$, one has
    \[
    F_{n+1} = \bigsqcup_{v \in F_{n+1} \cap H_n} F_n v.
    \]
\end{enumerate}
Let $X = \{0,1\}^G$ be the full shift. Choose $\F=(F_n)_{n\in\N}, (H_n)_{n\in\N}$ as above with $[H_n : H_{n+1}] > 2^n$ for all $n$. Define $x^{(1)}$ to be the constant-$0$ configuration. Given $x^{(k)}$, obtain $x^{(k+1)}$ by tiling $F_{k+1}$ with translates of $F_k$, copying $x^{(k)}$ on all but one tile, placing its bitwise complement on the remaining tile, and extending periodically via $H_{k+1}$. Then $(x^{(n)})_{n\in\N}$ converges in $D_{B,\mathcal{F}}$ to a configuration $x$ which is generic for some zero entropy measure with discrete spectrum.
\end{ex}

 \begin{figure}[h]
    \centering
    \begin{tikzpicture}[scale=0.2]
        \foreach \y in {-25,...,25} {
            \foreach \x in {-25,...,25} {
                \pgfmathparse{int(abs(\x)+abs(\y) > 0 ? gcd(abs(\x), abs(\y)) : -1)}
                \let\gcdval\pgfmathresult

                \ifnum\gcdval=1
                    \fill[blue!80] (\x,\y) circle (6pt); 
                \else
                    \ifnum\gcdval>0
                        \fill[gray!60] (\x,\y) circle (4pt); 
                    \fi
                \fi
            }
        }
    \end{tikzpicture}
    \caption{Central region of the lattice with points visible from the origin marked as larger blue dots.}
    \label{fig:coprime_lattice_view}
\end{figure}
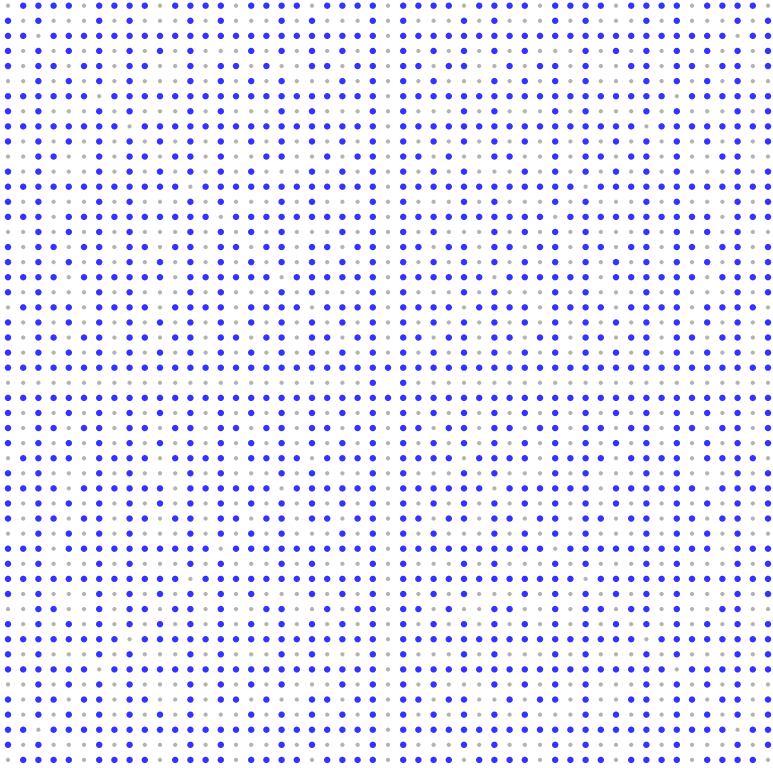

\begin{ex}\label{visible}

Let $(p_i)_{i\in\N}$ denote the increasing sequence of all prime numbers. Consider the set of \emph{visible points} in $\mathbb{Z}^2$ (see also Figure~\ref{fig:coprime_lattice_view} and \cite{Baake2015ErgodicPO,BfreeDynamics,KLW}), given by
\[
V := \left\{ (m,n) \in \mathbb{Z}^2 : \gcd(m,n) = 1 \right\} = \mathbb{Z}^2 \setminus \bigcup_i p_i \mathbb{Z}^2.
\]
Define $v \in \{0,1\}^{\mathbb{Z}^2}$ as the indicator function of $V$. To analyze the structure of $v$, we construct a sequence of approximating periodic configurations $(x^{(n)})_{n \in \N}\subset \{0,1\}^{\mathbb{Z}^2}$, where each $x^{(n)}$ is defined as the characteristic function of \[\mathbb{Z}^2 \setminus \bigcup_{i \leq n} p_i \mathbb{Z}^2.\]
Each $x^{(n)}$ exhibits periodicity with respect to the lattice $\left(\prod_{i=0}^{n} p_i\right)\mathbb{Z}^2$, and the discrepancy between $v$ and $x^{(n)}$ is localized within the union $\bigcup_{i > n} p_i \mathbb{Z}^2$. By applying Theorem~\ref{unequ}, we deduce that the sequence $(x^{(n)})_{n\in\N}$ converges to $v$ in the Besicovitch pseudometric (and so $v$ is generic for a zero entropy measure with discrete spectrum).
\end{ex}
\begin{rem}
    Example~\ref{visible} belongs to a general family of examples to which this technique extends \cite{MR4870827,Baake2015ErgodicPO,Kfree,MR4251829,MR4613803}, see~\cite[Example~9]{LM} for details.
\end{rem}

\section*{Acknowledgments}
The authors would like to thank Dominik Kwietniak for his useful suggestions and remarks. SB was supported by NCN Sonata Bis grant no. 2019/34/E/ST1/00237. MM acknowledges support of the National Science Centre (NCN), Poland, grant no. 2022/47/B/ST1/02866.

\bibliographystyle{plain}
\bibliography{library.bib}

\end{document}